\documentclass{article}
\usepackage[utf8]{inputenc}
\usepackage[bottom]{footmisc}
\usepackage{authblk}

\usepackage[margin=1in]{geometry} % uncomment for defaut margins

\usepackage{amsmath,amssymb,amsthm}
\usepackage{stmaryrd}
\usepackage{graphicx}
\usepackage{subcaption}
\usepackage[]{algorithm2e}
\usepackage{caption}

\usepackage{tikz}
\usepackage{bbm}
\usepackage{tikz-cd}
\usetikzlibrary{decorations.markings}
\usepackage{hyperref}
\usepackage{url}
\usepackage{accents}
\usepackage{mathrsfs}
\usepackage[]{tikz-cd}
\usepackage[all]{xy}
\usepackage{multicol}
\usepackage{mathtools}
\usepackage{color}
\usepackage[normalem]{ulem}

\newtheorem{Lemma}{Lemma}
\newtheorem{Corollary}[]{Corollary}

\theoremstyle{definition}
\newtheorem*{Claim*}{Claim}
\newtheorem*{Lemma*}{Lemma}
\newtheorem{Example}{Example}

\newtheorem{Remark}{Remark}

\newtheorem{Definition}{Definition}
\newtheorem{Proposition}{Proposition}
\newtheorem{Theorem}{Theorem}

\newcommand{\U}{\mathcal{U}}

\newcommand{\W}{\mathcal{W}}
\renewcommand{\P}{\mathcal{P}}

\renewcommand{\r}{\mathbb{R}}

\newcommand{\ip}[2]{\langle {#1}, {#2} \rangle}
\renewcommand{\o}{\mathcal{O}}
\renewcommand{\|}{|}

\renewcommand{\i}{\mathscr{I}}

\newcommand{\im}{\mathrm{im}}

\renewcommand{\c}{\mathbb{C}} 

\renewcommand\xleftrightarrow[2][]{\ext@arrow 0099{\longleftrightarrowfill@}{#1}{#2}}
\def\longleftrightarrowfill@{\arrowfill@\rightarrow}
\def\longrightarrowfill@{\arrowfill@\rightarrow}
\newcommand{\rank}[1]{\mathrm{rank}\left(#1\right)}

\renewcommand{\t}{\mathcal{T}}
\newcommand{\V}{\mathcal{V}}

\renewcommand{\o}[1]{\overline{#1}}

\renewcommand{\O}{\mathrm{O}}

\renewcommand{\i}{\mathcal{I}}

\newcommand{\valid}{\texttt{valid}}
\newcommand{\counts}{\texttt{counts}}
\newcommand{\used}{\texttt{used}}
\newcommand{\usedind}{\texttt{used\_indices}}

\usepackage{algpseudocode}

\title{Euclidean distance geometry and the orthgonal beltway problem}
\author{Dan Edidin and Arun Suresh\thanks{edidind@missouri.edu, aszxy@missouri.edu}}
\affil{Department of Mathematics, University of Missouri-Columbia, MO 65211}

\begin{document}
\maketitle
\begin{abstract}
The orthogonal beltway problem is the problem of recovering the $\O(n)$-orbit of a $\delta$-function supported
at a finite number of points in $\r^n$  from its auto-correlation or, equivalently, second moment. It 
was introduced in~\cite{bendory2024beltway}
as a generalization of the classical beltway problem in X-ray crystallography. The higher dimensional version of the beltway problem was motivated by cryo-electron microscopy~\cite{bendory2023autocorrelation}.
In this paper we prove that if $m > n$, then the $\O(n)$-orbit of generic binary signal supported at $m$ points where at least $\ell$ of them 
have equal magnitude can be recovered from its auto-correlation. We also provide a connection to Euclidean distance
geometry and prove, as a corollary of our main theorem, that if $m > n$, then the $\O(n)$-orbit of a generic collection of $m$ points
on the sphere $S^{n-1}$ can be recovered from their unlabeled interpoint distances.

We take advantage of the parallels to Euclidean distance geometry and develop a polynomial-time reconstruction algorithm for recovering the $\O(n)$-orbits of binary $\delta$-functions
from their second-moment data when at least one of the points has distinct magnitude. In $\mathbb{R}^3$, the complexity of our algorithm is bounded from above by $O(m^8)$ but we show that in practice the complexity is much lower. We also demonstrate that the algorithm is robust to low levels of noise.
Finally, we extend our algorithm to successfully perform recovery when all
the support vectors lie on a common sphere, and in this case we match the time complexity of $O(m^8)$.
\end{abstract}

\section{Introduction}
\subsection{Main contributions}
The beltway problem seeks to recover $m$ points $v_1,\dots, v_m$ on the unit
circle, up to rotation and reflection, from their unlabeled interpoint distances. 
This problem first arose in X-ray crystallography where it was observed by Patterson~\cite{patterson1944ambiguities} that the auto-correlation of a binary signal in $\r^n$ is equivalent to the set of interpoint distances of the support of the signal in the group of $n$-th roots of unity.
Motivated by cryo-electron microscopy, the paper~\cite{bendory2024beltway} introduced a higher-dimensional
version of the beltway problem which they called the {\em beltway problem over orthogonal groups}. This is the problem
of recovering the $\O(n)$-orbit of a $\delta$-function supported at $m$ points in $\r^n$ from its 
second moment under the action
of the orthogonal group $\O(n)$. When the points lie on a sphere, the information determined by the second moment is exactly the set of (unlabeled) interpoint distances, so the classical beltway problem is the $\O(2)$-beltway problem on $S^1$.

In~\cite{bendory2024beltway} the authors
showed that the second moment determines the $\O(n)$-orbit of a $\delta$-function supported at a collection of $m$ points in $\r^n$ which
have distinct magnitudes. On the other hand, they also show~\cite[Proposition 5.1]{bendory2024beltway} that if the $m$ support vectors are linearly independent (so $m  \leq n$) and at least two of points have the same magnitude then the second moment does not in general determine the $\O(n)$-orbit
of a binary $\delta$-function.
The purpose of this paper is to prove that, when $m > n$, the $\O(n)$-orbit of 
a binary $\delta$-function supported at $m$-points in $\r^n$ is determined, almost surely, by its second moment, no matter how many of the points have equal magnitude.

Let $\V_{m,n}$ denote the set of binary
$\delta$-functions $\sum_{i=1}^m \delta_{v_i}$ supported at $m$-distinct points in $\r^n$. The set $\V_{m,n}$
can be parametrized as an algebraic variety which is the quotient of the open set $\U_{m,n} =(\r^n)^m \setminus \Delta \subset (\r^n)^m$
parametrizing $m$-tuples of distinct points by the natural action of $S_m$ that permutes the points.  
For $0 \leq \ell \leq m$ define $\V^{\ell}_{m,n}$ to be the closed subset 
of $\V_{m,n}$ where a subset of at least $\ell$ of the points have the same magnitude. Note that if $\ell < \ell'$ then $\V^{\ell'}_{m,n}$ is a proper closed subset of $\V^{\ell}_{m,n}$.

% Statement of the main theorem 
\begin{Theorem}\label{thm:mainthm}
If $m > n \geq 2$ then there exists a non-empty Zariski open subset $U^{\ell}$ of $\V^{\ell}_{m,n}$ such that for any $
x = \sum_{i=1}^m\delta_{v_i} \in U^\ell$
the second moment $m_2(x)$ uniquely determines the $\O(n)$-orbit of $x$.
\end{Theorem}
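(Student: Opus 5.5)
The plan is to turn the second moment into combinatorial data about Gram entries and then use genericity to recover the Gram matrix of $v_1,\dots,v_m$. That Gram matrix determines the configuration up to $\O(n)$.

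First I would identify $m_2(x)$ explicitly. For $x=\sum_i \delta_{v_i}$, averaging $x(a)x(b)$ over $\O(n)$ gives a measure supported on triples $(\|a\|,\|b\|,\ip{a}{b})$. So $m_2(x)$ is equivalent to the unlabeled multiset
\[
M(x)=\{(\|v_i\|^2,\|v_j\|^2,\ip{v_i}{v_j}) : 1\le i,j\le m\}.
\]
Since $\|v_i-v_j\|^2=\|v_i\|^2+\|v_j\|^2-2\ip{v_i}{v_j}$, this is the same as knowing the norms together with the unlabeled distances, each distance tagged by the norms of its endpoints. The diagonal triples ($i=j$) recover the multiset of norms. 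On $\V^\ell_{m,n}$ the generic configuration has exactly one group of $\ell$ equal norms, with all other norms distinct. I therefore reduce the theorem to showing that, generically, the Gram matrix $G=(\ip{v_i}{v_j})$ is determined up to simultaneous permutation of rows and columns by $M(x)$. Because $m>n$, $G$ has rank $n$, and a Gram matrix of rank $\le n$ determines the point set up to $\O(n)$.

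For the reconstruction, points with distinct norms are labeled by their norms. The only ambiguity is a relabeling $\sigma$ of the $\ell$ points of equal norm $r$, possibly combined with choices among the off-diagonal entries between them. I would set up an incidence variety. Let $Z\subset \U_{m,n}\times\U_{m,n}$ be the pairs $(v,w)$ with $M(v)=M(w)$ and with the norms of $v$ in the prescribed pattern. $Z$ is a finite union of pieces $Z_\pi$, indexed by the bijection $\pi$ of ordered pairs $(i,j)\mapsto(i',j')$ that matches triples. For each $\pi$, $Z_\pi$ is cut out by polynomial equations $\ip{w_{i'}}{w_{j'}}=\ip{v_i}{v_j}$. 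The goal is to show that for each $\pi$ whose induced Gram matrix is not a conjugate of $G$ by a permutation, the image of $Z_\pi$ in the $v$-factor has dimension less than $\dim \V^\ell_{m,n}=mn-(\ell-1)$. Removing the closures of these finitely many images, and taking the image in $\V^\ell_{m,n}$, yields $U^\ell$.

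The dimension count is the heart of the argument. For a bad $\pi$ the Gram matrix $G'$ defined by $G'_{i'j'}=G_{ij}$ is symmetric with the correct diagonal, but is not a permuted $G$. $G'$ must still be PSD of rank $\le n$, because it is the Gram matrix of $w$. Since $m>n$, $G$ itself satisfies many $(n+1)\times(n+1)$ minor equations, and for generic $v$ these relations hold only in the forms forced by the construction. So I would fix a bad $\pi$ and exhibit a single configuration $v\in \V^\ell_{m,n}$ at which some $(n+1)$-minor of $G'$ is nonzero. The vanishing of that minor is then a nontrivial polynomial condition on $v$, and it cuts out a proper closed subset. The case $m>n$ is essential: when $m\le n$ there are no minor constraints at all, matching the counterexample of \cite[Proposition 5.1]{bendory2024beltway}.

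The hardest part is exhibiting these witness configurations uniformly over all bad $\pi$, including when $\ell=m$, where all points lie on a sphere. My first attempt would be a degeneration argument: choose $v$ with one coordinate subspace carrying $n$ orthonormal-like points plus one extra point having algebraically independent coordinates. For generic such $v$ all off-diagonal Gram entries are distinct and algebraically independent except for the rank relations. Then any bad $\pi$ produces a $G'$ whose $(n+1)$-minors, viewed as polynomials in independent transcendentals, cannot all vanish unless $\pi$ comes from a vertex permutation. This last step reduces to a graph-reconstruction statement: a bijection of edges of $K_m$ preserving the induced rank-$n$ minors must be induced by a vertex permutation. I would verify it by first checking $m=n+1$ directly, then arguing inductively on $m$ by restricting to $(n+1)$-subsets.
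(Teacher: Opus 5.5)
You have the right overall framework, and it matches the paper's outline. The second moment gives the triples $\mathcal{T}(X^\top X)$, and a competing configuration yields a symmetric rearrangement $G'$ of $G$, indexed by an edge bijection $\pi$ of $K_m$ that respects the norm classes. It then suffices to show that for each of the finitely many $\pi$ not induced by a vertex permutation, some $(n+1)\times(n+1)$ minor of $G'$ is not identically zero on the parameter space.

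\textbf{The gap.} The step that carries all the content is missing. The Gram entries of $m>n$ points are not independent transcendentals: they satisfy the $(n+1)$-minor relations of $G$ and the equal-diagonal condition. What must be shown is that some minor of $G'$ is not forced by those relations. Your reformulation, ``a bijection of edges of $K_m$ preserving the induced rank-$n$ minors must be induced by a vertex permutation,'' restates this rather than reducing it to something checkable. The remaining pieces of your sketch do not close it:
\begin{itemize}
\item The proposed degeneration does not give a witness. Nearly orthonormal points have Gram entries near $0$, so the entries are nearly equal and far from algebraically independent, which is exactly where minors of rearrangements tend to vanish. You give no specific minor that is provably nonzero there, and you do not keep the witness in $\V^\ell_{m,n}$ (on the sphere when $\ell=m$).
\item The induction on $m$ fails. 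For a non-induced $\pi$, the edges inside an $(n+1)$-subset $I$ need not be the image of the edges of any $(n+1)$-clique. So $G'_I$ is not a rearrangement of a principal submatrix of $G$, and the base case $m=n+1$ cannot feed the inductive step.
\item The base case is not ``direct'': for general $n$ it already needs the full argument.
\end{itemize}

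\textbf{What the paper supplies.} It uses two concrete ingredients that your proposal lacks.
\begin{itemize}
\item \emph{Combinatorial:} the lemma \cite[Lemma 1.2]{boutin2007which}. For $m\ge 5$, if $\pi$ is not induced by a vertex permutation, then two edges $\{i,j\},\{i,k\}$ sharing a vertex are sent to disjoint edges. There is a separate criterion for $m=4$, and $m=3$ is trivial.
\item \emph{Algebraic:} a monomial certificate. Treat $s$ and the $A_{ij}$ as indeterminates. In $B=G'$, the entry $A_{ij}$ (in both symmetric positions) and the entry $A_{ik}$ lie in three distinct rows and three distinct columns. So some $(n+1)$-minor of $B$ contains a monomial divisible by $A_{ij}^2A_{ik}$. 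No $(n+1)$-minor of $A$ can contain such a monomial, because $A_{ij}^2$ already uses row $i$ and column $i$, and $A_{ik}$ needs one of them.
\end{itemize}
All these minors are homogeneous of degree $n+1$, so that minor of $B$ is not in the ideal generated by the minors of $A$. After reducing to $\ell=m$, irreducibility of the constant-diagonal rank-$\le n$ locus (Lemma~\ref{lem:Smnirred}) turns this into a proper closed condition. An explicit uniform witness in your $v$-coordinates would serve equally well, but as written your proposal provides neither a witness nor a certificate.
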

Viewed in tandem with Proposition 5.1 of~\cite{bendory2024beltway}, we observe that the bound of $n+1$ is sharp.  If we take $\ell =m$
(i.e., all points lie on a sphere) then 
Theorem~\ref{thm:mainthm} verifies Conjecture 5.4 of~\cite{bendory2024beltway}. 
The case $\ell = m$ of our main result also has an important application
to Euclidean distance geometry. 
\begin{Corollary} \label{cor.interpoint}
If $S = \{v_1, \ldots , v_m\}$ is a generic configuration of $m > n$ points on the sphere $S^{n-1}$
 then $S$ is determined up to orthogonal transformation from the set of unlabeled distances
 $\{|v_i  - v_j|\}_{i < j}$.
\end{Corollary}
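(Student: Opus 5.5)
The corollary follows from Theorem~\ref{thm:mainthm} with $\ell = m$. The only work is to identify the second moment with the unlabeled distance data, and to match "generic configuration on $S^{n-1}$" with a Zariski open subset of $\V^m_{m,n}$.

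First we recall what $m_2(x)$ records for $x = \sum_{i=1}^m \delta_{v_i}$. It is the $\O(n)$-invariant part of the auto-correlation $\sum_{i,j}\delta_{v_i - v_j}$. Since the $\O(n)$-orbit of a vector in $\r^n$ is determined by its norm, $m_2(x)$ should be equivalent to the multiset of pairs $\bigl(\{|v_i|,|v_j|\}, |v_i - v_j|\bigr)$, or at least something no finer than that plus the multiset of magnitudes. This is the description used in~\cite{bendory2024beltway}, and I will take it as the working definition. When every $v_i$ lies on $S^{n-1}$, all magnitudes equal $1$. The data therefore reduce to $m$ copies of the zero difference (from $i=j$) together with the multiset $\{|v_i - v_j|\}_{i\neq j}$, which is each unordered distance counted twice. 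So for configurations on the unit sphere, knowing $m_2(x)$ is the same as knowing the unlabeled distance set $\{|v_i - v_j|\}_{i<j}$.

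Second, we handle genericity. The locus $\V^m_{m,n}$ consists of configurations with all $m$ points on a common sphere of some radius $r\ge 0$. Dilation by $r>0$ commutes with $\O(n)$ and preserves distinctness of points. Write $W$ for the image of configurations on the unit sphere, which sits inside $\V^m_{m,n}$ as the fibre $r=1$ of the radius function; $\V^m_{m,n}$ is then essentially $W \times \r_{>0}$. Let $U^m$ be the Zariski open set from Theorem~\ref{thm:mainthm}. I claim $U^m \cap W$ is a nonempty Zariski open subset of $W$. Since $U^m$ is nonempty open, it contains some configuration with radius $r>0$. Because $m_2$ scales equivariantly, $m_2(\lambda x)$ is the dilation of $m_2(x)$. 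Hence if $x$ is determined by $m_2(x)$ up to $\O(n)$, so is $\lambda x$. Replacing $U^m$ by $\bigcap_\lambda \lambda U^m$ is awkward, so instead I use the saturation $\bigcup_{\lambda>0}\lambda U^m$. This set is open in the Euclidean sense, and it is dense because $U^m$ is dense. Its trace on $W$ is a nonempty open dense subset on which recovery holds. To obtain honest Zariski genericity, I would take the complement of $U^m$, a proper closed subset $Z$, and observe that the cone over $Z\cap W$ is contained in a proper closed subset. Then $Z\cap W$ is a proper closed subset of $W$ because $\dim$ is preserved under the cone construction.

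Third, we conclude. Take $S=\{v_1,\dots,v_m\}\subset S^{n-1}$ generic, meaning it lies in the open set found above. Suppose $S'$ is another configuration on $S^{n-1}$ with the same unlabeled distances. Then $m_2(\sum\delta_{v_i'}) = m_2(\sum\delta_{v_i})$ by the first step. Theorem~\ref{thm:mainthm} then gives $S' = gS$ for some $g\in\O(n)$.

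The main obstacle is the genericity transfer in the second step. The unit sphere is not itself the $\V^\ell$ of the theorem, so I must make sure the good open set meets $r=1$ nontrivially. I would handle this by the scaling argument: the statement of Theorem~\ref{thm:mainthm} is scale-invariant, so the bad locus can be taken to be a cone. If the proof of the theorem works directly on the sphere, that is cleaner still.
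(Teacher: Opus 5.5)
Your proposal follows the paper's route. The paper deduces the corollary from Theorem~\ref{thm:mainthm} with $\ell=m$ together with Lemma~\ref{lem.interpoint}, which says that when all $|v_i|$ are equal the second moment is equivalent to the unlabeled distance set. Your scaling step is a useful addition, since the paper passes silently from generic points of $\V^m_{m,n}$ (spheres of all radii) to generic points of the unit sphere. That transfer really is needed: nothing in the statement of Theorem~\ref{thm:mainthm} stops $U^m$ from missing the slice $r=1$. Two details should be corrected, though neither changes the strategy.

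First, $m_2(x)$ is not the $\O(n)$-average of the translational autocorrelation $\sum_{i,j}\delta_{v_i-v_j}$. That object records only the numbers $|v_i-v_j|$ and no magnitudes, which contradicts your own ``working definition''. By \eqref{eq:secondmoment}, $m_2(x)=\sum_{i,j}\mu_{v_iv_j}$ is a distribution on $(\r^n)^2$. Each summand $\mu_{v_iv_j}$ depends only on the $\O(n)$-orbit of the pair $(v_i,v_j)$, i.e.\ on $(|v_i|^2,|v_j|^2,\langle v_i,v_j\rangle)$; cf.\ Proposition~\ref{prop:triples}. On $S^{n-1}$ this triple is $(1,1,1-\tfrac12|v_i-v_j|^2)$, so equal unlabeled distances force equal second moments. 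This is exactly the direction your third step uses, and it follows from the definition rather than needing to be assumed.

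Second, your last genericity sentence is false as stated. The complement $Z$ of $U^m$ may contain the whole slice $W=\{r=1\}$, and then $Z\cap W=W$ and the cone over $Z\cap W$ is everything. You do not need that sentence. Each dilation is an algebraic automorphism of $\V^m_{m,n}$, so $\bigcup_{\lambda>0}\lambda U^m$ is a union of Zariski open sets and hence Zariski open, not merely Euclidean open. It is also a nonempty cone, so its trace on $W$ is a nonempty Zariski open subset of $W$, and by the scale-equivariance of $m_2$ that you noted, recovery holds there. Equivalently, remove from $W$ the set $W\cap Z'$, where $Z'=\bigcap_{\lambda>0}\lambda Z$ is a closed cone. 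It contains the scale-invariant bad locus, and $W\cap Z'\neq W$: a cone containing $W$ would contain all of $\V^m_{m,n}$, whereas $Z'\subseteq Z\neq\V^m_{m,n}$.
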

%Specifically Corollary~\ref{cor.interpoint} states that if $m > n$ then a generic collection
%of $m$ points on the sphere $S^{n-1}$ can be recovered from their unlabeled interpoint distances.

In Section~\ref{sec:recoveringx}, we take advantage of the parallels to Euclidean distance geometry and develop a polynomial-time reconstruction algorithm for recovering the $\O(n)$-orbits of such $\delta$-functions
from their second-moment data when at least one of the points has magnitude distinct from those of the other support vectors. In $\mathbb{R}^3$, the complexity of our algorithm is bounded from above by $O(m^8)$ but we show that in practice the complexity is much lower.
%representing a significant improvement over the $O(m^{13})$ bound in~\cite{duxbury2016unassigned} for the unlabled distance problem. 
In Section~\ref{sec:unitsphere} we extend our algorithm to successfully perform recovery when all
the support vectors lie on a common sphere, 
and in this case we match the time complexity of $O(m^8)$.
Finally, in Section~\ref{sec:noisyalg}, we extend our algorithm to the setting where the second moment measurements are noisy, and we demonstrate robust performance and near perfect reconstruction accuracy in low-noise regimes.

\subsection{Connection to Cryo-EM} 
%The motivation for considering the orthogonal beltway problem comes from cryo-electron microscopy. 
Single particle cryo-electron microscopy is a leading technology used
to elucidate the structure of biological molecules~\cite{doerr2022emerging, toader2023frontier}. The goal of single particle cryo-electron microscopy is to recover the electrostatic potential function of a cryogenically frozen molecule from 
a collection of noisy measurements that are tomographic projections of the molecule rotated by unknown random elements
$SO(3)$. Because the signal-to-noise ratio in cryo-EM experiments is very low, it is not feasible to directly estimate the unknown rotations. One approach to signal recovery from cryo-EM measurements is
to use the \textit{method of moments} which takes advantage of the fact
that for a sufficiently large sample size the empirical moments of the measurements well approximate the true moments of the underlying electrostatic potential~\cite{bendory2020single, kam1980reconstruction, abas2022generalized}. 
However, the minimum number of samples required for successful estimation grows as $N\geq \sigma^{2d}$ where $d$ is the lowest order moment measurement that determines protein structure~\cite{bandeira2017optimal, perry2019sample}. 
While it known that generic molecular structures can be recovered from their (unprojected) third moments~\cite{bandeira2023estimation, bendory2025orbit}, such recovery demands a sample complexity of at least $\omega(\sigma^6)$. Thus it is beneficial to identify families of molecular structures that can be recovered from moments of order two, thereby reducing the sample complexity from $\omega(\sigma^6)$ to $\omega(\sigma^4)$.

In~\cite{bendory2023autocorrelation} the authors consider a model of a sparse molecular structure 
where an atom is specified by a weighted Dirac
delta function, and a molecule is a sum $x = \sum_{i = 1}^m w_i \delta_{v_i}$ of such point masses. 
There, it is proved that the magnitudes of the support vectors are distinct and any two vectors are linearly independent then 
the second moment is sufficient to recover the $\O(3)$-orbit of the molecule. 
Our results imply that for the sparse molecular structures modeled in~\cite{bendory2023autocorrelation}
the $\O(3)$-orbit of the structure can be recovered without assuming that the support vectors have distinct magnitudes.
In addition we give an algorithm for recovery in this case. 
In cryo-EM other second moment recovery results were obtained for structures which are sparse with respect to a generic basis~\cite{bendory2024sample} and those which lie in a generic semi-algebraic set~\cite{bendory2025transversality}.

\subsection{Connection to Euclidean distance geometry} 
Euclidean distance geometry studies the recoverability of point configurations in $\r^n$ from a combination of (a) partial, (b) noisy, or (c) unlabeled interpoint distance data. We refer the reader to \cite{dokmanic2015euclidean} for a survey. 
Much research in Euclidean distance geometry has focused on the classical beltway problem on $S^1$ and the related turnpike problem of recovering points on the real line - see \cite{huang2021reconstructing, connelly2024reconstruction} and references therein. In the more general setting of unlabeled distance geometry in $\r^n$, a fundamental result due to Boutin and Kemper~\cite[Theorem~1.6]{boutin2003reconstructing} establishes that, when $m \leq 3$ or $m \geq n+2$, there exists a hypersurface $H \subset (\mathbb{R}^n)^m$ such that every point configuration $X = \{v_1, \dots, v_m\}$ outside $H$ can be reconstructed from its unlabeled interpoint distances, up to rigid transformations. In their subsequent work \cite{boutin2007which}, the authors also give a polynomial-time evaluation criterion for determining whether a given point configuration is reconstructible from its unlabeled distance data. A related generalization of their result is explored within theory of rigid graphs, where only a subset of unlabeled distances are assumed to be available~\cite{gortler2019generic, gkioulekas2024trilateration}.

Relating these results to the orthogonal beltway problem, we observe in Section~\ref{sec.distancegeometry} that if a signal $x$ is supported at $\{v_1, \dots, v_m\}\subset \r^n$, then the data contained in its second moment  determines the interpoint
distances $\|v_i-v_j\|$ of the support vectors. In general the second moment typically contains more information than just a set of unlabeled interpoint distances. In fact, it is shown in \cite{bendory2024beltway} that for generic point configurations in $\r^n$, a case in which each support vector has distinct norm almost surely, the second moment already contains a complete labeling of the interpoint distances and orbit recovery is easy. The interesting and novel case occurs when multiple support vectors have the same norm and the second moment only gives 
a partial labeling of the interpoint distances. In the extreme case, when all the points lie on a sphere then the second moment gives no labeling information and is consequently
equivalent to the set
of unlabeled interpoint distances. Our Corollary~\ref{cor.interpoint} then follows from
Theorem~\ref{thm:mainthm}.

Turning to applications, the \emph{unlabeled} distance geometry problem enjoys several applications including reconstruction of room geometries from first order echoes, echo-based SLAM  \cite{dokmanic2013echoes, dokmanic2014localize, boutin2020drone} and geopositioning \cite{boutin2024global, boutin2025developments}. These applications motivate the development of efficient algorithms for recovering point configurations from their unlabeled distance data. There do exist polynomial-time reconstruction algorithms, such as the one proposed in \cite{duxbury2016unassigned}, that guarantee recovery of generic configurations from unlabeled distances in $\r^n$. However, for the same reasons as before, it is unclear if this algorithm can readily be applied to configurations where multiple support vectors have the same norm. This observation leads to the natural question: can one recover the support of a generic signal $x$ from its \textit{second moment} assuming that at least $\ell$ of these vectors have the same norm? Since the second moment contains more information than the unlabeled set of interpoint distances, it is also natural to wonder if the algorithm in \cite{duxbury2016unassigned} can be further optimized when adapted to this setting. The other natural question is to ask if it is possible to perform recovery assuming that the second moment data is corrupted with low levels of noise. To our knowledge, the noisy variant of the unlabeled distance geometry problem remains largely unresolved, with no efficient algorithms currently available. We believe that studying recovery from noisy second moment data provides a promising starting point for addressing this broader challenge.

\section{Background}\label{sec:background}
We recall some background material from~\cite{bendory2024beltway}. A {\em signal} in $\r^n$ is a function $x:\r^n \to \r$. The orthogonal group $\O(n)$ acts naturally on a the space of
signals via 
the action $g \cdot x (v) = x (g^{-1}v)$. Equipping $\O(n)$ with the Haar measure, we define the second moment or auto-correlation, $m_2(x)$, of a signal $x$ as the function  $(\r^{n})^2 \to \r$ defined by the formula
$$m_2(x)(\tau_1, \tau_2) = \int_{\O(n)} (g\cdot x)(\tau_1) (g\cdot x)(\tau_2) dg.$$
We notice immediately that $m_2$ is $\O(n)-$invariant; i.e $m_2(x) = m_2(gx)$. Consequently any signal $x \in \r^n$ can be at best recovered up to its $\O(n)$-orbit from its second moment measurements $m_2(x)$.

It is also possible to define the auto-correlation of a compactly supported distribution, such as a Dirac delta function, thereby producing a distribution supported on $\O(n)$-invariant (compact) subsets of $(\r^n)^2$.  Given a finite collection of points $S= \{v_1,\dots, v_m\} \subset \r^n$ we can define a sparse signal $x$ supported on $S$ as 
\begin{equation}\label{eq:deltafunc}
x = \sum_{k=1}^m w_k\delta_{v_k}    
\end{equation}
where $\delta_{v}$ is the Dirac $\delta-$distribution denoting a point mass located at $v \in \r^n$ and
the $w_k$ are weights. Through a slight abuse of terminology, we will refer to such signals $x$ as $\delta$-functions
and say that $\delta$-function is {\em binary} if $w_k = 1$ for all $k$. There is a natural $\O(n)$ action on the space of $\delta$-functions given by $$g\cdot x = \sum_{k=1}^m w_k\delta_{g\cdot v_k}$$
In this setting, the second moment $m_2(x)$ is a distribution on $(\r^n)^2$ supported on the $\O(n)$-orbits $\o{(v_i, v_j)} = \{(g\cdot v_i, g\cdot v_j) | g\in G\}$ given by the formula 
\begin{equation}\label{eq:secondmoment}
m_2(x)(\tau_1, \tau_2) = \sum_{i,j = 1}^m  w_iw_j\mu_{v_iv_j}(\tau_1, \tau_2)    
\end{equation}
where $$\mu_{t_it_j}(\tau_1, \tau_2) = \int_{\O(n)} (g\cdot \delta_{v_i})(\tau_1)(g\cdot \delta_{v_j})(\tau_2) dg.$$
The beltway problem over an orthogonal group $\O(n)$ seeks to recover the $\O(n)$-orbit of a $\delta$-function $x \in \r^n$ from its second moment measurements \eqref{eq:secondmoment}.

\begin{Definition}{\cite[Definition 3.1]{bendory2024beltway}} \label{def:collisionfree}
    Let $S = \{v_1,\dots, v_m\} \subseteq \r^n$ be a set of points.
    \begin{enumerate}
        \item[a.] We say that the set $S$ is collision-free if for every pair of two-element subsets $\{v_i, v_j\}$ and $\{v_\ell, v_m\}$ we have $g\cdot \{v_i, v_j\} = \{v_\ell, v_m\}$ for some $g \in \O(n)$ if and only if $\{v_i, v_j\} =  \{v_\ell, v_m\}$.
        \item[b.] We say that the set $S$ is radially-collision-free if for every $v_i, v_j \in S$, we have $v_i = g\cdot v_j$ for some $g \in \O(n)$ if and only if $v_i = v_j$.
    \end{enumerate}
\end{Definition}
Given a binary $\delta$-function $x$ supported at $m \geq 3$ collision-free points in $\r^n$, \cite[Theorem 3.2]{bendory2024beltway} provides an explicit upper bound on the number of $\O(n)$-orbits of $\delta$-functions $y$ with the same second moment as $x$. 
When the support is radially collision-free the bound is 1, meaning that the $\O(n)$-orbit of a binary $\delta$-function with radially collision-free support is determined from its second moment.\\

\paragraph{{\bf The second moment and Gram matrices.}} Suppose $x = \sum_{i =1}^m \delta_{v_i}$ is a binary $\delta$-function. Let
$X= [v_1 \ldots v_m]$ be the $n \times m$ matrix whose columns are the support vectors taken in any order 
and let $A = X^TX$ be the corresponding Gram matrix.
\begin{Proposition}{\cite[Proposition 4.1]{bendory2024beltway}}\label{prop:triples}
If $x$ has a collision-free support then $m_2(x)$ determines the set of triples
$\mathcal{T}(X) = \{(A_{ii}, A_{jj}, A_{ij})\}_{i < j}.$
\end{Proposition}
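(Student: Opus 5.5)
The plan is to read the triples straight off the support of the distribution $m_2(x)$. By \eqref{eq:secondmoment}, with all $w_i=1$, $m_2(x)$ is a sum of the invariant measures $\mu_{v_iv_j}$. Each $\mu_{v_iv_j}$ is supported on the orbit $\o{(v_i,v_j)}\subset(\r^n)^2$. So the first step is to record what determines such an orbit. Two pairs $(u,w)$ and $(u',w')$ lie in the same $\O(n)$-orbit if and only if their Gram data agree:
$$|u|^2=|u'|^2,\qquad |w|^2=|w'|^2,\qquad \ip{u}{w}=\ip{u'}{w'}.$$
This is the standard fact that an isometry between the spans of $\{u,w\}$ and $\{u',w'\}$ preserving inner products extends to an element of $\O(n)$. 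Hence the orbit $\o{(v_i,v_j)}$ is exactly the set of pairs $(\tau_1,\tau_2)$ with
$$(|\tau_1|^2,|\tau_2|^2,\ip{\tau_1}{\tau_2})=(A_{ii},A_{jj},A_{ij}).$$
In other words, each orbit is labeled by an ordered triple of Gram entries.

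The second step is to show that the distinct orbits appearing in the sum can be recovered from the distribution. Since $m_2(x)$ is a finite sum of positive measures, its support is the union of the orbits $\o{(v_i,v_j)}$ over all ordered pairs $(i,j)$, including the diagonal ones $i=j$. Distinct orbits are disjoint. The map
$$(\tau_1,\tau_2)\mapsto(|\tau_1|^2,|\tau_2|^2,\ip{\tau_1}{\tau_2})$$
is constant on each orbit, so evaluating it on the support of $m_2(x)$ produces the finite set of ordered triples $\{(A_{ii},A_{jj},A_{ij})\}_{i,j}$.

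From this set I would discard the diagonal triples, which are exactly those satisfying $A_{ii}=A_{jj}=A_{ij}$. Such a triple forces $|v_i-v_j|^2=0$, and the points are distinct, so no off-diagonal pair produces one. What remains is the set of off-diagonal ordered triples.

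The last step is to pass from ordered pairs to the indexing $i<j$. The pairs $(i,j)$ and $(j,i)$ give triples that differ only by swapping the first two coordinates. So I keep one representative of each pair, namely the one with $i<j$ in the chosen ordering of the columns of $X$. Taken as an unordered set of triples up to that swap, this is exactly $\mathcal{T}(X)$. Collision-freeness is what makes the count meaningful. It guarantees that distinct unordered pairs $\{v_i,v_j\}$ give distinct orbits, so each triple arises from a unique pair and the list has exactly $\binom m2$ entries with no hidden coincidences.

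The main obstacle is making the "support of a distribution" step rigorous. One has to check that the $\mu_{v_iv_j}$ are genuinely positive measures on the compact orbits, so that there is no cancellation and the support of the sum is the union of the supports. The other ingredient, the extension of isometries, is routine linear algebra.
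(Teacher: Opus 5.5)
Your argument is correct, and it is the natural proof of this statement, which the paper does not reprove but quotes from \cite{bendory2024beltway}: the support of $m_2(x)$ is the union of the pair orbits $\o{(v_i,v_j)}$, and $\O(n)$-orbits of pairs are classified by their Gram data $(|\tau_1|^2,|\tau_2|^2,\ip{\tau_1}{\tau_2})$. The obstacle you flag is immediate for binary signals, since each $\mu_{v_iv_j}$ is the push-forward of Haar probability measure under $g\mapsto(gv_i,gv_j)$, hence a positive measure whose support is exactly that orbit, so no cancellation can occur; collision-freeness is then only needed so that the recovered set (taken up to swapping the first two coordinates) has exactly $\binom{m}{2}$ elements.
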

In other words, the second moment of a binary $\delta$-function $x \in \r^n$ supported on
a set of collision free points $X= \{v_1, \ldots , v_m\}$ is the set of triples $\t(X^\top X) = \{(\|v_i\|^2, \|v_j\|^2, \ip{v_i}{v_j})\}$. It is crucial to note here that if all the $\|v_i\|$ are distinct, then $x$ has radially collision free support, and we obtain unique recovery immediately~\cite{bendory2024beltway}. Thus the second moment recovery problem is non-trivial only when 
some of the vectors in the support have the same norm. 

For binary $\delta$-functions supported on points on the unit sphere, \cite[Proposition 5.2]{bendory2024beltway} states that if $m \leq n$ there is a Zariski dense subset $\W$ such that for each $\delta$-function $x \in \W$, there are at least $(m-1)!$ non-orthogonally equivalent $\delta-$functions $y$ such that $m_2(x) = m_2(y)$. 
By contrast, our Theorem~\ref{thm:mainthm} implies that when $m>n$, the second moment determines the $\O(n)$-orbit
of a binary $\delta$-function supported at $m$ generic points on the sphere. In particular, this proves~\cite[Conjecture 5.4]{bendory2024beltway}. 

\section{Proof of Theorem \ref{thm:mainthm} and Corollary~\ref{cor.interpoint}}

\subsection{Preliminary notation.} 

\noindent{\bf Equivalent psd matrices.} Given an $m$-tuple of vectors $(v_1, \ldots  , v_m)\in (\r^{n})^m$ we use the notation $[v_1 \ldots v_m]$ to denote
the $n \times m$ matrix whose columns are the vectors $v_i$. If $X = [v_1 \ldots v_m]$ then the Gram matrix
$A=X^\top X$ is an $m \times m$ psd matrix of rank $l \leq \min\{m,n\}$. An $m \times m$ rank-$l$ psd matrix $B$ is said to be {\em equivalent} to $A$ if $B$ factors as $B = Y^TY$ where $Y$ is a $n \times m$ matrix obtained by permuting the columns of $X$.

\begin{Definition}\label{def.symmrearrangement}
   If $A= (A_{ij})$ and $B=(B_{ij})$ are symmetric $m \times m$ matrices then we say that $B$ is a 
{\em symmetric rearrangement} of $A$ if the following conditions are satisfied:
\begin{enumerate}
    \item $A$ and $B$ have the same diagonal.
    \item The set of triples $\mathcal{T}(A) = \{(A_{ii}, A_{jj}, A_{ij})\}$ equals the set of triples
    $\mathcal{T}(B) = \{(B_{ii}, B_{jj}, B_{ij})\}$.
\end{enumerate}
\end{Definition}

If  the diagonal entries of a symmetric matrix are identical, then any permutation of the off-diagonal entries that preserves the symmetry of
the matrix is a symmetric rearrangement. However, if the diagonal is non-constant then second condition of Definition~\ref{def.symmrearrangement} restricts the possible permutations of the off-diagonal entries. For 
example if $A = \begin{bmatrix} 1 & 2 & 3\\ 2 & 1 & 4 \\ 3 & 4 & 2\end{bmatrix}$,  
the only non-trivial symmetric rearrangement of $A$ is $B = \begin{bmatrix} 1 & 2 & 4\\2 & 1 & 3\\ 4 & 3 & 2\end{bmatrix}$.
The symmetric matrix $C = \begin{bmatrix} 1 & 3 & 2\\ 3 & 1 & 4\\2 & 4 & 2\end{bmatrix}$ is not a symmetric rearrangement
of $A$ because $\mathcal{T}(C)$ contains the triple $(C_{11}, C_{33}, C_{13}) = (1,2,2)$ which is not in the set
$\mathcal{T}(A)$. Indeed if the diagonal entries of a symmetric matrix are all distinct then it has no non-trivial symmetric rearrangements.

Note that if $A$ and $B$ are equivalent psd matrices with the same diagonal then $B$ is a symmetric rearrangement of $A$. However the converse does not hold. In general a symmetric rearrangement of a low-rank psd matrix need not not have low rank nor be psd. Indeed the strategy of the proof Theorem~\ref{thm:mainthm} is to show that if $A$ is a `generic' low rank psd matrix with specified
diagonal configuration then the only symmetric rearrangements of the same rank are equivalent to $A$.
\begin{Example}
Consider the rank $3$ matrix 
$$X = [v_1\, \dots\, v_4] = \begin{bmatrix}
    1/\sqrt{6} & -2/\sqrt{14} & 3/\sqrt{26} & 5/\sqrt{50} \\
    2/\sqrt{6} & 1/\sqrt{14} & 1/\sqrt{26} & 4/\sqrt{50} \\
    1/\sqrt{6} & 3/\sqrt{14} & 4/\sqrt{26} & -3/\sqrt{50}
\end{bmatrix}$$representing a collection of $4$ points on the unit sphere in $\r^3$.
Let $$A = X^\top X = \begin{bmatrix}
 1. & 0.356348 & 0.720577 & 0.57735 \\
 0.356348 & 1. & 0.470757 & -0.555492 \\
 0.720577 & 0.470757 & 1. & 0.194145 \\
 0.57735 & -0.555492 & 0.194145 & 1. \\
\end{bmatrix}$$
be the Gram matrix of $X$. It has rank 3 and its eigenvalues are $\{2.1077,1.60268,0.289624,0\}$.
The matrix 
$$B = \begin{bmatrix}
 1. & 0.720577 & 0.470757 & 0.57735 \\
 0.720577 & 1. & -0.555492 & 0.194145 \\
 0.470757 & -0.555492 & 1. & 0.356348 \\
 0.57735 & 0.194145 & 0.356348 & 1. \\
\end{bmatrix}
$$
is a symmetric rearrangement which has rank 4 and eigenvalues $\{2.07115,1.58357,0.51753,-0.172247\}$.
On the other hand the symmetric rearrangement 
$$C = \begin{bmatrix}
 1. & 0.470757 & -0.555492 & 0.356348 \\
 0.470757 & 1. & 0.194145 & 0.720577 \\
 -0.555492 & 0.194145 & 1. & 0.57735 \\
 0.356348 & 0.720577 & 0.57735 & 1. \\
\end{bmatrix}
$$
has the same eigenvalues as $A$, and is equivalent to $A$ because it is the Gram matrix of the matrix $$Y = [v_2 v_3 v_4 v_1] = \begin{bmatrix}
    -2/\sqrt{14} & 3/\sqrt{26} & 5/\sqrt{50}  & 1/\sqrt{6}\\
     1/\sqrt{14} & 1/\sqrt{26} & 4/\sqrt{50}  & 2/\sqrt{6}\\
   3/\sqrt{14} & 4/\sqrt{26} & -3/\sqrt{50} & 1/\sqrt{6}
\end{bmatrix} 
$$
It is possible to construct point configurations $X$ whose Gram matrix $A= X^\top X$ has a non-equivalent symmetric rearrangement which is a Gram matrix of the same rank. See \cite[Example 5.5]{bendory2024beltway}
\end{Example}
\subsection{Reduction to a statement about symmetric rearrangements of psd matrices.}
Our goal is to show that there is a non-empty Zariski open set $U^\ell$ in $\V^{\ell}_{m,n}$ such that
for all $x \in U^\ell$, $m_2(x)$ determines the $\O(n)$-orbit of $x$.
To do this, we may assume that the points in the support of $x$ span all of $\r^n$ and are collision free,
since the locus of $\delta$-functions with these properties is a dense Zariski open set in $\V^\ell_{m,n}$ for every $\ell$.

Given a set of points $S =\{v_1, \ldots , v_m\} \subset \r^n$ with $m > n$, choose any ordering 
of the points and let $A = X^\top X$ where $X = [v_1 \ldots v_m]$. 
In particular, if $x = \sum_{i=1}^m \delta_{v_i}$
then, as previously observed, $m_2(x)$ determines the set of triples $\{(A_{ii}, A_{jj}, A_{ij})\}$ for
all pairs $(i,j)$. 
Now suppose $T = \{w_1, \ldots ,w_m\}$ is another set of points and let $y = \sum_{i=1}^m \delta_{w_i}$. If $m_2(x) = m_2(y)$ then the sets of magnitudes (counted with multiplicity) $\{|v_1|, \ldots , |v_m|\}$
and $\{ |w_1|, \ldots , |w_m|\}$ are equal. Thus, after possibly reordering the points of $T$ we can assume that $|w_i| = |v_i|$ 
which implies that if $Y= [w_1 \ldots w_m]$ then $Y^TY$ is a symmetric rearrangement of $X^\top X$.

Hence, to prove Theorem~\ref{thm:mainthm} we must show that there is a non-empty Zariski open set
$U^\ell \subset \V^\ell_{m,n}$ such that if $x \in U^\ell$ and 
$m_2(x) = m_2(y)$ for $y \in \V^{\ell}_{m,n}$ then 
the corresponding Gram matrices associated to $x$ and $y$ are equivalent.
Let $\mathcal{P}_{n,m}$ be the set of rank-$n$ psd $m \times m$ matrices
and let $\mathcal{P}^\ell_{m,n}$ be the closed subset defined by the condition that 
that a set of at least $\ell$ entries on the diagonal are equal. 
To prove Theorem~\ref{thm:mainthm} we will prove the following
statement about Gram matrices. 

\begin{Proposition} \label{prop.rearrangement}
For $0 \leq \ell \leq m$ there exists a dense Zariski open set in $U^{\ell}_{m,n} \subset \mathcal{P}_{n,m}^{\ell}$
such that for every $A \in U^{\ell}_{m,n}$ any non-equivalent symmetric rearrangement of $A$ 
has rank more than $n$.
\end{Proposition}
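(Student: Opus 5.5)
Since there are finitely many candidate rearrangements, I will prove the statement one rearrangement at a time. Fix $\ell$ and reduce to the worst diagonal pattern: by permuting indices we may assume $A_{11}=\cdots=A_{\ell\ell}$, with the remaining diagonal entries unconstrained. The variety $\mathcal{P}^\ell_{m,n}$ is then a finite union of components $\mathcal{P}^{I}$ indexed by the subset $I$ of equal diagonal entries. Each component is irreducible: it is the image of the irreducible variety of $n\times m$ matrices $X$ of full rank $n$ whose columns indexed by $I$ have equal norm, which is a product of a sphere-cone and affine spaces. So it suffices to work on one component.

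On that component, a symmetric rearrangement of $A$ is obtained from a permutation $\sigma$ of the set of unordered pairs $\{i,j\}$, $i<j$. The permutation must respect the constraint on triples, so it is determined combinatorially by which diagonal entries coincide. For $A$ generic on the component, the only diagonal coincidences are those forced by $I$, and all off-diagonal entries are distinct. Hence the set of symmetric rearrangements of $A$ is $\{\sigma\cdot A\}$, where $\sigma$ ranges over a \emph{fixed} finite set $\Sigma_I$ of pair-permutations compatible with $I$. For each $\sigma\in\Sigma_I$, the condition $\mathrm{rank}(\sigma\cdot A)\le n$ is closed: it is the vanishing of all $(n+1)$-minors of $\sigma\cdot A$, which are polynomials in the entries of $A$. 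Therefore, for each $\sigma$ there are two possibilities. Either $\sigma\cdot A$ has rank $\le n$ on the whole component, or it has rank $>n$ on a dense open subset. Intersecting these finitely many open sets with the open set where the off-diagonal entries are distinct and no extra diagonal coincidences occur gives $U^\ell_{m,n}$. It remains to show that whenever $\sigma\cdot A$ has rank $\le n$ identically on the component, $\sigma\cdot A$ is equivalent to $A$, i.e.\ $\sigma$ is induced by a vertex permutation $\pi$ of $\{1,\dots,m\}$ preserving the diagonal.

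This last step is the main obstacle. I would attack it by a specialization/degeneration argument, since the identically-low-rank condition persists on the closure of the component. First, specialize to configurations containing $n$ vectors $v_1,\dots,v_n$ forming a basis, chosen with prescribed Gram matrix, including points in the equal-norm class when $\ell\ge n$. The extra points $v_{n+1},\dots,v_m$ are generic subject to the norm constraints. Since $m>n$, the rank condition forces every $(n+1)\times(n+1)$ principal minor of $\sigma\cdot A$ to vanish, which gives a Cayley--Menger type polynomial identity in the entries of $A$. Write $G$ for the Gram matrix of $v_1,\dots,v_n$ and $a_k=(\langle v_i,v_k\rangle)_{i\le n}$. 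Take the minor on indices $\{1,\dots,n,k\}$. Using generic algebraic independence of the entries, the identity $\det\begin{pmatrix} G & a_k\\ a_k^\top & A_{kk}\end{pmatrix}=0$ involves the specific monomial structure of $a_k^\top G^{-1}a_k$. That structure can only be matched after permuting if $\sigma$ maps the "star" of pairs $\{i,k\}$ compatibly, which pins down a vertex map on those indices.

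Concretely, I would show by comparing degrees in the variables attached to a single generic extra vector $v_k$ that $\sigma$ must send the set of pairs containing $k$ to the set of pairs containing some $\pi(k)$. This is where $m\ge n+1$ is used: with $m\le n$ no minor of size $n+1$ exists, matching \cite[Proposition 5.2]{bendory2024beltway}. Then a Whitney-type line-graph argument shows that a permutation of edges of $K_m$ sending stars to stars is induced by a vertex permutation, for $m\ge 3$, with $m=4$ handled directly. Triple-compatibility then forces $\pi$ to preserve the diagonal. So $\sigma\cdot A$ is equivalent to $A$, which completes the proof.
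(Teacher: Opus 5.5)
Your reduction is fine: the irreducible pieces $\mathcal{P}^I$, the fixed finite set $\Sigma_I$ of admissible pair permutations for generic $A$, the closed/open dichotomy from irreducibility, and the check that triple-compatibility forces an inducing vertex permutation to preserve the diagonal. You could even shortcut it as the paper does, since the constant-diagonal locus lies in the closure of every $\mathcal{P}^I$ and every $\sigma$ is admissible there.

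The gap is the step you call the main obstacle, and it is the entire content of the proposition: a pair permutation $\sigma$ not induced by a vertex permutation must have $\mathrm{rank}(\sigma\cdot A)>n$ somewhere on the component. You give only a plan, and its ingredients do not work as stated.

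The identity you display is the vanishing of a principal minor of $A$ itself. It holds automatically and carries no information about $\sigma$; what must be contradicted is the vanishing of the minors of $\sigma\cdot A$.

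The appeal to ``generic algebraic independence of the entries'' is not available. The entries of $A$ satisfy the rank relations, and on the sphere component $a_k$ satisfies $a_k^\top G^{-1}a_k=s$. Moreover, the $m-1$ linear forms $w\mapsto\langle v_j,w\rangle$ through which $w=v_k$ enters $\sigma\cdot A$ are linearly dependent once $m-1>n$. So a term of high degree in $w$ in the formal expansion of a minor of $\sigma\cdot A$ proves nothing. Several permutation terms feed the top-degree part and can cancel. Even a nonzero top-degree form, if it is a multiple of $|w|^2$, is compatible with the minor vanishing identically on the sphere. You need a specific minor and an actual verification that it is nonzero at some point of (the closure of) the component.

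The case $m=4$ is also not a side remark. Besides the $24$ vertex-induced permutations of the six pairs, there are $24$ more that preserve adjacency of pairs: the extra automorphisms of the octahedron $L(K_4)$. For these, the positions in $\sigma\cdot A$ occupied by the entries $A_{jk}$, $j\neq k$, form a triangle for every $k$. So no two of those positions are disjoint, a degree count driven by disjoint pairs gives nothing, and ``handled directly'' is exactly the missing case. Your Whitney-type step, by contrast, is trivial: a pair permutation sending every star onto a star is automatically vertex-induced, since $\{i,j\}=\mathrm{star}(i)\cap\mathrm{star}(j)$.

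The paper supplies the two ingredients your sketch lacks.
\begin{itemize}
\item The combinatorial criterion of \cite[Lemma 1.2]{boutin2007which}. For $m\geq 5$, a non-induced $\sigma$ places two entries $A_{ij},A_{ik}$ sharing an index in positions of $B$ with disjoint index sets. For $m=4$, the entries $A_{12},A_{13},A_{14}$ do not lie in a single row or column of $B$.
\item An explicit $(n+1)\times(n+1)$ minor of $B$ containing a monomial, such as one divisible by $A_{ij}^2A_{ik}$, that cannot occur in any minor of $A$. This holds because in $A$ the entries $A_{ij}$ and $A_{ik}$ share row $i$ and their transposes share column $i$.
\end{itemize}
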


\subsection{Reduction to the case $\ell = m$}\label{sec:relaxationtom-1}
To prove the proposition it suffices to show that for each $\ell$ the set of matrices in $A \in \mathcal{P}^{\ell}_{m,n}$
which do not satisfy the conclusion of Proposition~\ref{prop.rearrangement} lie in a Zariski closed subset of 
$\mathcal{P}^{\ell}_{m,n}$ of strictly smaller dimension. Let $Z \subset \mathcal{P}_{n,m}$ parametrize matrices $A$ that have non-equivalent rank $n$ psd rearrangements. To prove the proposition we need to show that 
the open set $U =\mathcal{P}_{n,m} \setminus\overline{Z}$ has dense intersection with $\mathcal{P}^{\ell}_{n,m}$
for every $\ell$.
Since psd matrices with all diagonal entries distinct 
satisfy the conclusion of Proposition~\ref{prop.rearrangement} by~\cite[Theorem 3.2]{bendory2024beltway} we know
that $U$ is non-empty. Moreover, if $\ell' > \ell$ then $\mathcal{P}^{\ell'}_{n,m} \subset \P^{\ell}_{n,m}$.
Thus, it suffices to show that the intersection of $U$ with $\mathcal{P}^{m}_{m,n}$ is Zariski dense.
In other words, it suffices to prove the following proposition.
\begin{Proposition} \label{prop.pmn}
    There is a Zariski open set $U^{m}_{m,n} \subset \mathcal{P}^{m}_{m,n}$ such that for all $A \in U^{m}$ any non-equivalent symmetric rearrangement of $A$ is not in $\mathcal{P}^{m}_{m,n}$.
\end{Proposition}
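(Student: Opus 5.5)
The plan is a finiteness-plus-irreducibility argument. We normalize the common diagonal value (the problem is invariant under scaling), and then work in the affine space $\mathcal{S}$ of symmetric $m\times m$ matrices with all diagonal entries equal to $1$. Its coordinates are the $N=\binom{m}{2}$ off-diagonal entries $A_{ij}$, $i<j$, indexed by the edges of the complete graph $K_m$.

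Because the diagonal is constant, every symmetric rearrangement of $A$ has the form $\sigma(A)$, where $\sigma$ is a permutation of the edge set $E(K_m)$ acting on the coordinates. Among these, the rearrangements equivalent to $A$ are exactly those coming from vertex permutations $\pi\in S_m$, i.e.\ $\sigma(\{i,j\})=\{\pi(i),\pi(j)\}$.

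Let $V\subset \mathcal{S}$ be the Zariski closure of the unit-diagonal part of $\mathcal{P}^m_{m,n}$. This is the image of $(S^{n-1})^m$ under $X\mapsto X^\top X$, hence irreducible, and it is cut out inside $\mathcal{S}$ by the vanishing of all $(n+1)\times(n+1)$ minors.

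For each of the finitely many edge permutations $\sigma$ not induced by a vertex permutation, consider the closed set $Z_\sigma=\{A\in V : \sigma(A)\in V\}$. I will show $Z_\sigma\neq V$. Since $V$ is irreducible, $Z_\sigma$ then has smaller dimension, and
\[
U^m_{m,n}=\Big(V\setminus \bigcup_\sigma Z_\sigma\Big)\cap \mathcal{P}^m_{m,n}
\]
(rescaled to all values of the common diagonal) is the desired dense Zariski open set.

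So suppose, for contradiction, that $Z_\sigma=V$, i.e.\ the linear coordinate permutation $\sigma$ maps $V$ into itself, and hence onto itself by dimension. Then $\sigma$ preserves the ideal $I(V)$. I will extract combinatorial information from the low-degree part of $I(V)$.

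\textbf{Case $m=n+1$.} Here $V$ is the hypersurface $\det A=0$. The polynomial $\det$ is irreducible on $\mathcal{S}$, since $\det$ of a generic symmetric matrix stays irreducible after specializing the diagonal to $1$ for $m\ge 3$. Hence $\det\circ\sigma=c\det$ for some constant $c$.

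Now compare the homogeneous components of $\det A$ in the off-diagonal variables:
\begin{itemize}
\item the degree-$2$ part is $-\sum_{i<j}A_{ij}^2$, and it is automatically preserved by any $\sigma$;
\item the degree-$3$ part is $2\sum_{i<j<k}A_{ij}A_{jk}A_{ik}$, a sum over the triangles of $K_m$.
\end{itemize}
Matching the degree-$0$ parts gives $c=1$. Matching the degree-$3$ parts shows that $\sigma$ maps triangles of $K_m$ to triangles.

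A Whitney-type argument then finishes the case. Two edges share a vertex if and only if they lie in a common triangle, so $\sigma$ is an automorphism of the line graph $L(K_m)$. For $m\ge 3$, $m\neq 4$, such automorphisms are induced by $S_m$ by Whitney's theorem. For $m=4$, triangle-preservation also rules out the exceptional automorphisms, which swap stars and triangles. Hence $\sigma$ is induced by a vertex permutation, a contradiction.

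\textbf{Case $m>n+1$.} The analogous step is the one I expect to be the main obstacle. The ideal $I(V)$ is generated in degree $n+1$ by the $(n+1)$-minors, but in the unit-diagonal slice these are inhomogeneous and not principal. The plan is as follows.
\begin{enumerate}
\item Consider a principal $(n+1)$-minor $\det A_{[J]}$ for $J\subset[m]$ with $|J|=n+1$. By the previous computation, its lowest nonconstant homogeneous components are $-\sum A_{ij}^2$ and $2\sum A_{ij}A_{jk}A_{ik}$, taken over edges and triangles inside $J$.
\item Find a $\sigma$-stable description of these components: show that the linear span of $I(V)$ in degrees $\le 3$ (or the graded "initial" pieces with respect to degree) is spanned by the principal minors together with non-principal minors. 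The non-principal minors contribute no pure triangle terms, since a non-principal minor has no constant term and its cubic part involves paths rather than triangles.
\item Conclude that the span of triangle monomials modulo lower-order terms is $\sigma$-invariant, so again $\sigma$ maps triangles to triangles.
\end{enumerate}
If this filtration argument proves delicate, the fallback is to exhibit, for each such $\sigma$, one explicit point $A\in V$ with $\operatorname{rank}\sigma(A)>n$. Here it suffices to choose a point configuration for which $\sigma(A)$ has a nonvanishing $(n+1)$-minor, which reduces to the $m=n+1$ case on a suitable subset of vertices mapped to a non-triangle-preserving pattern.

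Either route yields $Z_\sigma\subsetneq V$ for all non-induced $\sigma$, proving Proposition~\ref{prop.pmn}.
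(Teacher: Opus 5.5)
Your case $m=n+1$ is correct, and it takes a genuinely different route from the paper. You get $\det\circ\sigma=\det$ from irreducibility of the hypersurface, read off that $\sigma$ permutes triangles from the cubic component $2\sum A_{ij}A_{jk}A_{ik}$, and finish with Whitney's theorem. The paper instead exhibits a minor of $B$ containing a monomial divisible by $A_{ij}^2A_{ik}$, which no minor of $A$ contains, using Boutin--Kemper's Lemma~1.2. Irreducibility of $\det$ on the unit-diagonal slice needs one line: $V$ is irreducible, and $\det$ is reduced because $\partial\det/\partial A_{ij}=2\lambda u_iu_j\neq 0$ for some $i\neq j$ at a rank-$n$ point with kernel vector $u$ (a unit-diagonal matrix cannot kill a standard basis vector).

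The gap is the case $m\ge n+2$, which the proposition requires and which you do not prove. Your steps 1--3 rest entirely on step 2, and that step is unsupported and not well posed, for three reasons. First, for $n\ge 3$ the minors have degree $n+1\ge 4$, so they do not lie in the part of $I(V)$ of degree $\le 3$ that you want to describe. Second, elements of $I(V)$ are arbitrary combinations $\sum_J g_J\Delta_J$ of minors $\Delta_J$ with polynomial coefficients, and their homogeneous components of a fixed degree are not governed by those of the individual minors. Third, the identity matrix (the origin of your coordinates) is not on $V$, so the ideal of lowest-degree forms of $I(V)$ is the whole ring. Nothing in the outline forces $\sigma$ to preserve triangles once $m>n+1$.

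The fallback is not carried out, and the suggested reduction to the case $m=n+1$ does not apply. For $|J'|=n+1$, the minor of $\sigma(A)$ on $J'$ is the determinant of a unit-diagonal matrix whose off-diagonal entries are the coordinates $A_e$ for the set $F$ of edges that $\sigma$ moves into $E(K_{J'})$. The whole difficulty is that $F$ is in general not the edge set of a clique, so no subset of $n+1$ points is involved. To finish along your lines you would need two facts.

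(a) If every such $F$ is a clique edge set, then $\sigma$ is induced. Two edges lie in $\binom{m-3}{n-2}$ common $(n+1)$-cliques if they meet and in $\binom{m-4}{n-3}$ if they are disjoint. These numbers differ for $m\ge n+2$, so $\sigma$ is an automorphism of $L(K_m)$ and your Whitney argument applies.

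(b) If $F$ is not a clique, the functions $A_e$, $e\in F$, are algebraically independent on $V$. Then this minor, a nonzero polynomial in them, cannot vanish identically on $V$.

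Statement (b) is the real content and is missing. It can be obtained, for example, by coning to generic $(n-1)$-dimensional rigidity, where $K_{n+1}$ is the only circuit with at most $\binom{n+1}{2}$ edges. The paper avoids this step with its monomial criterion, which treats $m=n+1$ and $m>n+1$ the same way, since it only needs one pair of adjacent edges sent to disjoint ones.
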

\subsubsection{Proof of Proposition~\ref{prop.pmn}}
Proposition~\ref{prop.pmn} is an immediate consequence of the following two lemmas.

\begin{Lemma}\label{lem:Smnirred}
If $n \geq 2$. The space $\mathcal{S}_{m,n}$ parameterizing $m\times m$ complex symmetric matrices $A$  of rank $\leq n$ 
such that $A_{11} = A_{22} = \ldots  = A_{m,m}$
is an irreducible complex variety.
\end{Lemma}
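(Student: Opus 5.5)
We will show that $\mathcal{S}_{m,n}$ is the image of an irreducible variety under a morphism. Complex symmetric $m\times m$ matrices of rank $\leq n$ are exactly those of the form $A = X^\top X$ with $X$ an $n\times m$ complex matrix; this uses the fact that over $\c$ every symmetric form is diagonalizable with entries in $\{0,1\}$. The condition $A_{11}=\cdots=A_{mm}$ then says that the columns $v_1,\dots,v_m$ of $X$ all satisfy $q(v_i)=t$ for a common $t\in\c$, where $q(v)=v^\top v$ is the standard quadratic form. Hence $\mathcal{S}_{m,n}$ is the image of
\[
\Phi: W \to \mathrm{Sym}_m(\c), \qquad W=\{(t,v_1,\dots,v_m) : q(v_i)=t \text{ for all } i\}\subset \c\times(\c^n)^m, \qquad \Phi(t,X)=X^\top X .
\]
Since the image of an irreducible variety is irreducible, and the closure $\mathcal{S}_{m,n}$ is closed anyway, it suffices to show that $W$ is irreducible.

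To show $W$ is irreducible, we consider the projection $\pi: W\to \mathbb{A}^1$, $(t,X)\mapsto t$.

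\textbf{Fibers over $t\neq 0$.} The fiber is $Q_t^m$, where $Q_t=\{q=t\}\subset\c^n$. For $n\geq 2$ the quadric $Q_t$ is smooth and irreducible, since $x_1^2+\cdots+x_n^2-t$ is irreducible for $n\ge 2$ and $t\ne0$. Moreover, scaling by $\sqrt{s}$ identifies all fibers over $t\neq0$, so $\pi^{-1}(\c^*)$ is irreducible. Concretely, it is the image of $\c^*\times Q_1^m$ under $(s,v_i)\mapsto(s^2,sv_i)$, and a product of irreducible varieties is irreducible.

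\textbf{The fiber over $t=0$.} This is $C^m$, where $C=\{q=0\}$ is the isotropic cone. It has dimension $m(n-1)$, which equals the dimension of the generic fiber. So $\pi^{-1}(0)$ is a closed subset of $W$ of dimension $m(n-1)$, while $\pi^{-1}(\c^*)$ has dimension $m(n-1)+1$. The danger is that $C^m$ might be an extra irreducible component of $W$. This cannot happen: $W$ is cut out in $\c^{1+mn}$ by $m$ equations, so by Krull every component has dimension at least $1+mn-m > m(n-1)$. Hence $\pi^{-1}(0)$ lies in the closure of $\pi^{-1}(\c^*)$, and $W$ is irreducible. (Alternatively, each point of $C^m$ is a limit of points with $t\ne0$: perturb each isotropic $v_i$ along a direction $u$ with $v_i^\top u\neq0$.)

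\textbf{Main obstacle.} The step requiring the most care is the case $n=2$. There $C$ is the union of two lines $x_1=\pm i x_2$, so $C^m$ is reducible, and $Q_t$ is a smooth conic, hence still irreducible. The Krull dimension argument above does not depend on the irreducibility of $C$, so it handles this case as well. We only need to double-check the count: every component of $W$ has dimension $\geq m+1$, while $\dim C^m = m$, so no component can lie inside $\pi^{-1}(0)$.

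Finally, irreducibility of $W$ gives irreducibility of $\Phi(W)=\mathcal{S}_{m,n}$. The image is already closed, because it is defined by the vanishing of the $(n+1)$-minors together with the linear diagonal conditions, and the surjectivity onto it was established at the start.
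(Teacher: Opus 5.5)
Your proof is correct. It shares the paper's skeleton: the surjection $X\mapsto X^\top X$ onto symmetric matrices of rank $\le n$ (you use congruence diagonalization over $\mathbb{C}$, the paper uses the Takagi factorization), the same incidence variety $W=\{(t,v_1,\dots,v_m): q(v_i)=t\}$ (the paper's $V$), and the projection to $\mathbb{A}^1$.

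The difference is in how you exclude a component of $W$ lying over $t=0$. The paper writes $V$ as the $m$-fold fiber product over $\mathbb{A}^1$ of the smooth irreducible hypersurface $H=\{s=x^\top x\}$. It gets flatness of $V\to\mathbb{A}^1$ by base change and composition, and then uses that a flat family over a curve has no components over closed points, so irreducibility of the generic fiber gives irreducibility of $V$. You replace flatness by Krull's principal ideal theorem: every component of $W$ has dimension at least $1+mn-m>\dim C^m$. Combined with the explicit scaling map $\mathbb{C}^*\times Q_1^m\to\pi^{-1}(\mathbb{C}^*)$, this gives $W=\overline{\pi^{-1}(\mathbb{C}^*)}$.

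The paper's argument is shorter but rests on standard facts it does not spell out: flatness of a dominant map from an integral variety to a smooth curve, and that components of a flat family dominate the base. Yours is more elementary and self-contained. It makes the irreducibility of the open part explicit and shows transparently why the reducible special fiber $C^m$ for $n=2$ causes no trouble. In fact your dimension count is exactly what underlies the paper's flatness claim: $W$ is a complete intersection with equidimensional fibers over a smooth curve.

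One minor point: your parenthetical alternative, perturbing each $v_i$ along some $u$ with $v_i^\top u\neq 0$, does not by itself produce a common value of $t$ for all $i$. You would want, e.g., isotropic $u_i$ with $v_i^\top u_i=1$, plus a separate treatment of $v_i=0$. The Krull argument does not need it.
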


\begin{proof}
By Lemma~\ref{lem:takagi} in Appendix~\ref{sec:Takagi}, the locus of complex symmetric matrices with rank $\leq n$ is the image of $\c^{n\times m}$ under the map $\phi\colon \c^{n \times m} \to \c^{m \times m}; X \mapsto X^\top X$. Since the image of an irreducible set is irreducible, it suffices to find an irreducible subvariety of $\c^{n\times m}$ whose image under $\phi$ is $\mathcal{S}_{m,n}$. 
Let
$$V = \{(v_1,\dots, v_{m}, s) \, | \, f(v_1, s) = f(v_2,s) = \dots = f(v_{m},s) = 0\} \subseteq \c^{n\times m} \times \mathbb{A}^1$$
where $$f(x, s) = s-\sum_{j=1}^n x_{j}^2 = s-x^\top x.$$

If $W$ is the projection of $V$ to $\c^{n \times m}$ then $\phi(W) = \mathcal{S}_{n,m}$.
Thus it suffices to prove that $V$ is irreducible. To see this note that $V$ is isomorphic $m$-fold fiber product $H \times_{\mathbb{A}^1} H \times_{\mathbb{A}^1} \ldots \times_{\mathbb{A}^1} H$
where $H \subset \c^n \times \mathbb{A}^1$ is the hypersurface defined the equation $f(x,s) = s - \sum_{j=1}^n x_j^2$. Since $n \geq 2$ this hypersurface is irreducible and non-singular so the projection $H \to \mathbb{A}^1$ is flat. Hence $V \to \mathbb{A}^1$ is also flat, and the generic fiber is irreducible, so $V$ is irreducible. (Note that
if $n > 2$ then all fibers of $V \to \mathbb{A}^1$ are irreducible.)
\end{proof}

\begin{Lemma} \label{prop.relaxation}
There is a non-empty Zariski open subset of $\mathcal{S}_{m,n}$ such that for all $A$ is in this open set
if $B$ is a 
non-equivalent symmetric rearrangement of $A$ then $\rank{B} > n$. 
\end{Lemma}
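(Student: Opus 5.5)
Since $\mathcal{S}_{m,n}$ is irreducible by Lemma~\ref{lem:Smnirred}, it suffices to show that the bad locus is contained in a proper Zariski closed subset. Because all diagonal entries of $A\in\mathcal{S}_{m,n}$ are equal, a symmetric rearrangement of $A$ is simply $B=\sigma\cdot A$, obtained by applying a permutation $\sigma$ of the $\binom{m}{2}$ index pairs $\{i,j\}$ to the off-diagonal entries. There are finitely many such $\sigma$. For each one, let $Z_\sigma\subset \mathcal{S}_{m,n}$ be the closed set of $A$ with $\rank{\sigma\cdot A}\le n$, cut out by the $(n+1)\times(n+1)$ minors of $\sigma\cdot A$.

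We call $\sigma$ \emph{induced} if it comes from a permutation $\pi\in S_m$ via $\{i,j\}\mapsto\{\pi(i),\pi(j)\}$. For induced $\sigma$, the matrix $\sigma\cdot A$ is equivalent to $A$. One caveat: a non-induced $\sigma$ may still give an equivalent matrix for special $A$, but then $A$ lies in the locus where $\sigma\cdot A=\tau\cdot A$ for some induced $\tau$, and that locus is proper closed for a generic $A$ with distinct off-diagonal entries. The lemma therefore reduces to the following claim: for every non-induced $\sigma$, $Z_\sigma\neq\mathcal{S}_{m,n}$. Irreducibility of $\mathcal{S}_{m,n}$ is what makes this enough, since the finite union of proper closed subsets $Z_\sigma$ is then proper, and its complement is the required dense open set.

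To prove the claim, for each non-induced $\sigma$ we exhibit one $A\in\mathcal{S}_{m,n}$, possibly complex, with $\rank{\sigma\cdot A}>n$. We will use two facts:
\begin{itemize}
\item Since $m>n$, every principal $(n+1)\times(n+1)$ submatrix of $A$ has vanishing determinant.
\item A permutation of pairs that sends every triangle $\{ij,jk,ik\}$ to a triangle is induced (Whitney's theorem on line-graph isomorphisms, for $m\ge 5$; small $m$ handled directly).
\end{itemize}
So if $\sigma$ is non-induced, some triangle, or more generally some $(n+1)$-subset $I$, has its pair set $\binom{I}{2}$ not mapped onto $\binom{J}{2}$ for any $J$. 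The principal minor of $\sigma\cdot A$ on $I$ is then the determinant of a matrix whose off-diagonal entries are Gram entries that do not form a principal Gram submatrix.

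The plan is to find an explicit point of $\mathcal{S}_{m,n}$ where this minor is nonzero. A natural choice is $A=X^\top X$ with $X$ having columns of equal norm, chosen as degenerations: most columns nearly orthogonal or nearly equal, so that most off-diagonal entries are $0$ except those on carefully chosen pairs. Lemma~\ref{lem:takagi} allows complex $X$, and then isotropic-type choices give extra flexibility. For example, one can take $v_i$ mostly pairwise orthogonal with one prescribed nonzero pair outside $\sigma^{-1}$ of the pairs in $I$, making the minor reduce to something like $1-c^2\neq 0$ while $A$ still has rank $\le n$.

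The main obstacle is this step: controlling the rank of $A$ while forcing a specific minor of $\sigma\cdot A$ to be nonzero, uniformly over all non-induced $\sigma$. We expect to need a case split on how $\sigma$ fails to preserve triangles: either a triangle is sent to a non-triangle, i.e.\ a path or three disjoint pairs, or a star is broken. In each case we would build a tailored configuration from nearly orthogonal vectors plus one or two perturbed pairs.
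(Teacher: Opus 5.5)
\textbf{There is a genuine gap: the central claim is not proved.} Your reduction is sound and follows the same skeleton as the paper's proof. Since the diagonal is constant, a symmetric rearrangement is $\sigma\cdot A$ for one of finitely many permutations $\sigma$ of the pairs, and induced $\sigma$ give equivalent matrices. Irreducibility of $\mathcal{S}_{m,n}$ (Lemma~\ref{lem:Smnirred}) then reduces everything to showing $Z_\sigma\neq\mathcal{S}_{m,n}$ for each non-induced $\sigma$.

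That claim is the whole content of the lemma, and you do not prove it. You only describe a hoped-for witness, and the concrete suggestions do not work as stated:
\begin{enumerate}
\item Take $A=sI+c(E_{ab}+E_{ba})$, with $E_{ab}$ a matrix unit, so that $A$ has a single nonzero off-diagonal pair. Then $\sigma\cdot A$ has the same shape, hence equals $P^\top AP$ for a permutation matrix $P$ and has the same rank. A witness has to be tailored to $\sigma$ so that $\sigma\cdot A$ is not a permutation conjugate of $A$, while every $(n+1)$-minor of $A$ still vanishes. That is exactly the difficulty you defer.
\item For $s\neq 0$, ``mostly orthogonal'' is not available. Every principal $(n+1)$-minor $\det(sI+N)$ of $A$ must vanish, so no $n+1$ of the vectors can be nearly pairwise orthogonal. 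For instance, the matrix above has rank at least $m-1$. It lies in $\mathcal{S}_{m,n}$ only when $m=n+1$ and $c=\pm s$, and then your quantity $1-c^2$ (with $s=1$) is exactly $0$.
\item A principal minor on an $(n+1)$-set $I$ with $\sigma(\binom{I}{2})$ not of the form $\binom{J}{2}$ does not exist in the sharp case $m=n+1$, where $I=[m]$ is forced. When $m\ge n+2$, you give no argument that such a principal minor is not identically zero on $\mathcal{S}_{m,n}$.
\end{enumerate}

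\textbf{How the paper closes this step.} It does not construct any point; it compares monomials. For $m\ge 5$, \cite[Lemma 1.2]{boutin2007which} (equivalently, your Whitney argument) shows that a non-induced rearrangement places $A_{ij}$ and $A_{ik}$, for some $i,j,k$, at disjoint positions $\{p,q\}$ and $\{p',q'\}$ of $B$. A suitable $(n+1)$-minor of $B$, with rows including $p,q,p'$ and columns including $q,p,q'$, then contains a monomial divisible by $A_{ij}^2A_{ik}$. This is in general not a principal minor. No $(n+1)$-minor of $A$ can contain such a monomial: using both copies of $A_{ij}$ occupies row $i$ and column $i$, which is exactly where both copies of $A_{ik}$ sit. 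All these minors are homogeneous of degree $n+1$, so that minor of $B$ is not in the ideal generated by the minors of $A$. The case $m=4$ is handled separately.

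\textbf{If you keep the witness route.} Completed, it would show non-vanishing on $\mathcal{S}_{m,n}$ directly. Non-membership in the ideal of minors only gives that if the ideal is radical, so this would be a real advantage. A usable start is that $s=0$ is allowed. The zero-diagonal $A$ supported on $\{i,j\},\{i,k\}$ has rank $2$, while $\sigma\cdot A$ is supported on two disjoint pairs and has rank $4$. This settles $n\le 3$ for $m\ge 5$, but for $n\ge 4$ you would still need a genuine construction.
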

\begin{proof}
Let $A$
be an arbitrary matrix in $\mathcal{S}_{m,n}$. Since the diagonal of $A$ is constant, we
 know by Lemma~\ref{lem:takagi} that there are column vectors $v_1 \ldots , v_m \in \c^n$ such
 that $A_{ij} =v_i^T v_j$ for $i < j$,  and $A_{ii} = v_i^T v_i=s$. 
 
Since $A$ has rank at most $n$ the $(n+1) \times (n+1)$ 
minors of $A$ must vanish. These minors are homogeneous polynomials of degree $n+1$ in the variables
$\{s, A_{ij}\}_{i,j \in [m-1], \, i<j}$.
Let $B$ be a symmetric rearrangement 
of $A$. If $B$ also has rank at most $n$ then the $(n+1)\times (n+1)$ minors of $B$, which are homogeneous polynomials
of degree $n+1$
in the same set of variables, must also vanish. 

From Lemma~\ref{lem:Smnirred} we know that $\mathcal{S}_{m,n}$ is irreducible. We will show that for any non-equivalent symmetric rearrangement $B$ of $A$, there exists a minor $f$ of $B$ that is not a linear combination of the $(n+1)\times (n+1)$ minors of $A$. By the homogeneity of the minors involved, this would imply that $f$ is not in the ideal $I$ generated by the $(n+1)\times (n+1)$ minors of $A$. Thus the set of matrices $A
\in S_{m,n}$ such that there exists a non-equivalent symmetric rearrangement  $B$
must 
lie in a Zariski closed subset of strictly smaller dimension than $\dim S_{m,n}$.

If $m =3$ (in which case we must have $n=2$) all symmetric rearrangements of $A$ are equivalent, as observed
 in the proof of~\cite[Lemma 1.2]{boutin2007which}, so the proposition is trivially true.

 Now assume $m \geq 5$ and let $B$ be a non-equivalent symmetric rearrangement of $A$. 
 Then there is a permutation $\phi$ of the set of distinct pairs $\{i,j\}$ such
 $B_{pq} = A_{\phi\{ij\}}$. Since $B$ is not equivalent to $A$ by~\cite[Lemma 1.2]{boutin2007which}
 there exists a distinct pair of indices $(i,j),(i,k)$ such that $\phi\{i,j\} \cap \phi\{i,k\} = \emptyset$.
 This means that in matrix $B$ the entries $A_{ij}= A_{ji}$, $A_{ik}=A_{ki}$ lie in different rows and columns. In particular
 this implies that $B$ has an $(n+1) \times (n+1)$ minor\footnote{Note that we assume that $n \geq 2$ so $n+1 \geq 3$.} which contains a monomial divisible by
 $A_{ij} A_{ji} A_{ik} = A_{ij}^2 A_{ik}$. On the other hand, no such monomial can appear in an $(n+1) \times (n+1)$ minor of $A$
since  in the matrix $A$, $A_{ik}$ is in the same row as $A_{ij}$ and $A_{ki} = A_{ik}$ is in the same column
as $A_{ji} = A_{ij}$.

If $m =4$ the criterion of~\cite[Lemma 1.2]{boutin2007which} implies that if $B$ is a non-equivalent symmetric rearrangement
of $A$ then $\phi\{1,2\} \cap \phi\{1,3\} \cap \phi\{1,4\} = \emptyset$ meaning that
in the matrix $B$ the entries $A_{12}, A_{13}, A_{14}$ do not lie in a single column or row, and once again
we can find an $(n+1) \times (n+1)$ minor of $B$ which cannot be expressed as a linear combination of the minors of $A$ as illustrated
in Example~\ref{ex.34} below.
\end{proof}
\subsection{Examples} \,\\

%\arun{[[Erased the words ``the matrix" to fix alignment issue with the variables.]]}
\begin{Example}\label{ex.34}
Consider $A = \begin{bmatrix} s & A_{12} & A_{13} & A_{14}\\
A_{12} & s & A_{23} & A_{24}\\
A_{13} & A_{23} & s & A_{34}\\
A_{14} & A_{24} & A_{34} & s\end{bmatrix}$ in the 7 indeterminates~$s, A_{12}, A_{13}, A_{14}, A_{23}, A_{24}, A_{34}$
and let $B$ be the symmetric rearrangement 
$B = \begin{bmatrix} s & A_{23} & A_{13}&  A_{14}\\
A_{23} & s & A_{12} & A_{24}\\
A_{13} & A_{12} & s & A_{34}\\
A_{14} & A_{24} & A_{34} & s\end{bmatrix}$
which corresponds to a permutation $\phi$ with $\phi\{1,2\} = \{2,3\}, \phi\{1,3\} = \{1,3\}, \phi\{1,4\} = \{1,4\}$.
In the matrix $B$ both instances of $A_{14}$ and $A_{12}$ lie in different rows and columns so $B$ has a minor which contains a monomial divisible by $A_{14}^2 A_{12}$. However, no minor of $A$ contains
such a monomial because $A_{12}$ and one instance of $A_{14}$ lie in the same row.
\end{Example}
\begin{Example}
%    Drawing connections to Convex geometry, the well-known Schoenberg criterion \cite{Schoenberg1935RemarksTM} (when adapted to our context) states that a symmetric matrix $A \in \r^{m\times m}$ is the Gram matrix associated to some $X \in \r^{n\times m}$ if and only if $A$ is positive-semidefinite and $\rank{A} \leq n$. 
%\dan{I think that this is a bit of reach to connect it with the Schoenberg criterion.}
Our main result states that if $A$ is a Gram matrix associated to a generic 
$x \in \mathcal{V}_{m,n}^{\ell}$, then any non-equivalent symmetric rearrangement of $A$ is not the Gram matrix of a point configuration in $\r^{n}$, since the rearranged matrix has rank more than $n$. However, our result does not preclude the possibility that a non-equivalent symmetric rearrangement of $A$ is psd and thus the Gram matrix associated to a
configuration of $m$ points in $\r^{k}$ with $k > n$.
Indeed, consider
$$
X =
\begin{bmatrix}
 0.5673 & -0.4593 &  0.1548 &  1.0000 \\
 0.4515 & -0.4885 & -0.0278 &  0.0000 \\
-0.6887 & -0.7419 &  0.9876 &  0.0000
\end{bmatrix}
\in \mathbb{R}^{3\times 4}.
$$

Its Gram matrix is given by
$$
A = \begin{bmatrix}
 1.0000 &  0.0299 & -0.6049 &  0.5673 \\
 0.0299 &  1.0000 & -0.7902 & -0.4593 \\
-0.6049 & -0.7902 &  1.0000 &  0.1548 \\
 0.5673 & -0.4593 &  0.1548 &  1.0000
\end{bmatrix} \in \r^{4\times 4}
$$

Now consider the symmetric rearrangement defined by the transposition $A_{12} \leftrightarrow A_{13}$. This produces the rearranged matrix
$$
B =
\begin{bmatrix}
 1.0000 & -0.6049 &  0.0299 &  0.5673 \\
-0.6049 &  1.0000 & -0.7902 & -0.4593 \\
 0.0299 & -0.7902 &  1.0000 &  0.1548 \\
 0.5673 & -0.4593 &  0.1548 &  1.0000
\end{bmatrix}
$$
which has eigenvalues 
$\lambda(B) = (
2.3522, 1.1582, 0.0191, 0.4705)$. Thus $B$ is positive definite
matrix of rank $4$ which is the Gram matrix of the point configuration
$$X' = \begin{bmatrix}
1.0000 & -0.6049 &  0.0299 &  0.5673 \\
0.0000 &  0.7963 & -0.9696 & -0.1458 \\
0.0000 &  0.0000 &  0.2428 & -0.0147 \\
0.0000 &  0.0000 &  0.0000 &  0.8104
\end{bmatrix} \in \mathbb{R}^{4\times 4}.$$

Computational experiments indicate that the existence of psd symmetric rearrangements of higher rank is a common phenomenon. We initialized $1000$ uniformly random $3\times 4$ matrices $X$ such that all the columns of $X$ had norm $1$. In this setup, each Gram matrix $X^\top X$ admits a total of $\binom{4}{2}! = 720$ symmetric rearrangements with exactly $720 - 4! = 696$ of them not equivalent to $X^\top X$. For each Gram matrix $X^\top X$, we iterated through all of its $696$ non-equivalent symmetric rearrangements, and observed the following. After $1000$ trials, we saw that $786$ ($78.6\%$) of the rank $3$ psd matrices $X^\top X$ admitted at least one symmetric rearrangement that was positive semidefinite of higher rank (equal to $4$). Across the entire experiment we observed a total of $5384$ non-equivalent symmetric rearrangements that were positive semidefinite, representing approximately $1\%$ of the possible non-equivalent symmetric rearrangements of any given Gram matrix $X^\top X$. This suggests that if $n < m$ there exists a set of positive measure in the variety $\mathcal{P}_{m,n}$ such that each matrix in this set has a symmetric rearrangement which is psd of higher rank $> n$. \\

Repeating this experiment for $5$ points on a unit sphere quickly gets out of hand, since in this case, each Gram matrix has $10!$ possible symmetric rearrangements. We can, however, scale this experiment to five points, if we assume that one of the supports has a distinct norm. To this end, we initialized $1000$ uniformly random $3 \times 5$ matrices $X$ such that the first four columns of $X$ had norm $1$ and the last column had norm $2$. In such a setup, the Gram matrix $X^\top X$ admits a total of $\binom{4}{2}! - 1 = 6! - 1 = 719$ non-equivalent symmetric rearrangements. The reason for this will be made clear in the Section~\ref{sec:recoveringx}. For each Gram matrix $X^\top X$, we iterated through all of its $719$ non-equivalent symmetric rearrangements, and observed the following. After $1000$ trials, we saw that $443$ ($44.3\%$) of the rank $3$ psd matrices admitted at least one symmetric rearrangement that was positive semidefinite of higher rank. Across the entire experiment we observed a total of $6536$ non-equivalent symmetric rearrangements that were positive semidefinite representing approximately $2\%$ of the possible non-equivalent symmetric rearrangements.
\end{Example}
\subsection{Proof of Corollary~\ref{cor.interpoint} and connection to the work 
of Boutin and Kemper} \label{sec.distancegeometry}
Corollary~\ref{cor.interpoint} follows immediately from Theorem~\ref{thm:mainthm} and the following lemma.
\begin{Lemma} \label{lem.interpoint}
If $x = \sum_{i=1}^m \delta_{v_i}$ then $m_2(x)$ determines the set of interpoint distances 
in the set $S= \{v_1, \ldots , v_d\}$.
If all of the $v_i$ have the same magnitude then  the second moment $m_2(x)$ is equivalent to the set of (unlabeled) interpoint distances.
\end{Lemma}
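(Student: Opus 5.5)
We argue through the triples of Proposition~\ref{prop:triples}, using the polarization identity
$$|v_i - v_j|^2 = |v_i|^2 + |v_j|^2 - 2\ip{v_i}{v_j} = A_{ii} + A_{jj} - 2A_{ij}.$$
Here $A = X^\top X$ with $X = [v_1 \ldots v_m]$.

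\textbf{First assertion.} Proposition~\ref{prop:triples}, or more generally the description \eqref{eq:secondmoment} of $m_2(x)$ as a sum of measures supported on the orbits $\o{(v_i,v_j)}$, shows that $m_2(x)$ determines the multiset of orbits $\{\o{(v_i,v_j)}\}_{i<j}$. Each such orbit is determined by its triple $(A_{ii},A_{jj},A_{ij})$. This holds because two pairs $(u,w)$ and $(u',w')$ lie in the same $\O(n)$-orbit exactly when their $2\times 2$ Gram matrices agree, by the standard fact that Gram data determine a configuration up to an orthogonal transformation.

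Applying the polarization identity to each triple gives the unlabeled distance $|v_i-v_j|$. Hence $m_2(x)$ determines the multiset of interpoint distances. Strictly, in the non-collision-free case the coefficients in \eqref{eq:secondmoment} record multiplicities. I would argue at the level of orbits rather than invoke Proposition~\ref{prop:triples} verbatim, since the distance function is constant on orbits.

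\textbf{Second assertion.} Suppose all $|v_i| = r$. Then every triple has the form $(r^2, r^2, A_{ij})$, and $|v_i-v_j|^2 = 2r^2 - 2A_{ij}$. For the converse direction, $r$ must be recovered from the distance data. I expect this to be the main obstacle, and would handle it as follows.

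\begin{itemize}
\item In the setting of Corollary~\ref{cor.interpoint} the points lie on $S^{n-1}$, so $r=1$ is known a priori.
\item Otherwise, the statement should be read as saying that $m_2(x)$ is equivalent to the common radius $r$ together with the unlabeled distances.
\end{itemize}

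Once $r$ is known, $d \mapsto r^2 - d^2/2$ is a bijection from distances to inner products. The multiset of unlabeled distances therefore recovers the multiset of triples $(r^2,r^2,A_{ij})$, and with them the orbits $\o{(v_i,v_j)}$ and their multiplicities. The diagonal terms $\mu_{v_iv_i}$ in \eqref{eq:secondmoment} are all the same orbit $\o{(v,v)}$ with $|v|=r$, counted $m$ times, so they are also recovered. This reconstructs $m_2(x)$ exactly, which proves the equivalence.
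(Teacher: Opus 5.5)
Your proof is correct and follows the paper's argument: you read off the triples $(A_{ii},A_{jj},A_{ij})$ from $m_2(x)$ as in Proposition~\ref{prop:triples} and convert each one to a distance with the polarization identity, and in the equal-norm case the triples carry no labeling information. Your two refinements make explicit points the paper's short proof leaves implicit: you work at the level of orbits, so you do not need the collision-free hypothesis of Proposition~\ref{prop:triples}, and you note that the converse direction requires the common radius $r$, which is $1$ on $S^{n-1}$ as in Corollary~\ref{cor.interpoint}.
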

\begin{proof}
By Proposition~\ref{prop:triples} the second moment of the binary delta function $x = \sum_{i=1}^m \delta_{v_i}$
is the set of triples $\mathcal{T}(X) = \{(\|v_i\|^2, \|v_j\|^2, \langle v_i, v_j \rangle)\}$.
From this data we
determine the set of interpoint distances  $\{d_{ij} = \|v_i - v_j\|\}$ 
via the equation $d_{ij}^2 = \|v_i\|^2 + \|v_j\|^2 - 2\ip{v_i}{v_j}$. 
If all of the $v_i$ have the same magnitude then we have no labeling information of the triples
$(\|v_i\|^2, \|v_j\|^2, \langle v_i, v_j \rangle)$ and therefore no labeling
information on the interpoint distances $d_{ij}$.
\end{proof}
\begin{Remark}
If the magnitudes of the vectors
are distinct (i.e. the support of $x$ is radially collision free) then we can sort
the vectors by magnitude and obtain a complete labeling of
the interpoint distances $d_{ij}$ from the second moment, and the recovery of the $\O(n)$-orbit from this data is immediate~\cite{bendory2024beltway}.
When there are support vectors with equal magnitude, the second moment provides a partial labeling
of these distances, and when all the $v_i$ have equal magnitude, then the second moment provides
no labeling. 
\end{Remark}
%$\ell =m$ then $\V^{\ell}_{m,n}$ consists of delta-functions supported at $m$ points $v_1, \ldots , v_m$ on the sphere. 
%In this case we observe that the relation $d_{ij}^2 = \|v_i\|^2 + \|v_j\|^2 - 2\ip{v_i}{v_j}$
%implies that the second moment gives exactly the set of unlabeled interpoint distances since the magnitudes of the vectors
%$v_i$ all have the same value - for example we can normalize and assume $\|v_i\| = 1$ for all $i$.
%This implies the following corollary.
%\arun{[[Do we need to change the presentation of the paragraph environment?]]}
\paragraph{{\bf Connection to the work of Boutin and Kemper}}
In~\cite{boutin2003reconstructing} Boutin and Kemper considered the problem of recovering
    $m$ points in $\r^n$ from their unlabeled interpoint distances.
    They proved that when $m \geq n+2$ 
    there exists a hypersurface in $\r^{n\times m}$ such that any support configuration from the complement of this hypersurface can be recovered up to rigid motions from its unlabeled set of interpoint distances. 
    %Since any collection of
    %vectors in $\r^n$ can be centered at the origin, their result is equivalent to the statement that a generic collection
    %of points centered at the origin can be recovered up to orthogonal transformation from their interpoint distances.
    Our Corollary~\ref{cor.interpoint} cannot be deduced from the result of Boutin and Kemper because there is no way to know if the sphere is contained in the complement of their hypersurface. In addition, we improve their bound from $m \geq n+2$ to $m \geq n+1$ but we note that our result does not preclude the possibility (at least with $m = n+1$) that
    there exists a Zariski dense set of configurations of $m$ points on $S^{n-1}$ such that for each 
    configuration $S$ in this set we can find a configuration of $m$ points in $\r^n$ which do not all lie on a common sphere but have the same set of interpoint distances as $S$.

\section{Algorithms for orbit recovery from the second moment}\label{sec:recoveringx}
Let $x \in \V_{m,n}^\ell$ be a binary $\delta$-function supported on a collision-free set $S$ with at least $\ell$ distinct support vectors having the same norm. In this section we seek to recover the $\O(n)$-orbit of $S$ of the $\delta$-function $x$ from knowledge of its second moment $m_2(X)$. 
As observed in Section~\ref{sec:background}, this is equivalent to recovering the equivalence class of the Gram matrix $A = X^TX$ 
from the set of triples $\t(A)$.
We begin by observing that since the $m\times m$ symmetric matrix $A$ has rank $\leq n$, a naive approach to determine $A$ from $\t(A)$ is to construct every possible symmetric matrix $B$ from $\t(A)$ and check if $\mathrm{rank}(B) \leq n$. For generic $S$, this procedure eventually recovers 
a matrix equivalent to $A$, since Theorem~\ref{thm:mainthm} guarantees that there is a unique way up to equivalence to assemble the entries of $\t(A)$ into an $m\times m$ symmetric matrix that has rank at most $n$. 
%Upon obtaining the correct $A$, it can be readily decomposed as $X^\top X$ to obtain $X\in \r^{n\times m}$ corresponding to some ordering of the support vectors $S$ (up-to orthogonal translates).  
However, the computational cost of this procedure is prohibitive because the number of possibilities grows as $O(m^2!)$, making it impractical even for modest values of $m$. This motivates the search for algorithms that reduce the computational complexity.

To this end, in Section~\ref{sec:noiseless} we show that if the support $S$ is assumed to contain at least one vector of distinct magnitude, then the information contained in the second moment is equivalent to the knowledge of \textit{partially labeled} interpoint distances between vectors in $S$. Given this understanding, we reformulate our recovery problem as a relaxation of the unlabeled distance geometry problem and develop an algorithm to recover such support configurations given that the distances measured are noiseless. Since the second-moment $\t(A)$ in this setting contains strictly more information than just an unlabeled set of interpoint distances, we demonstrate that our algorithm offers a meaningful improvement to the time-complexity of unlabeled reconstruction algorithms such as the one in \cite{duxbury2016unassigned}. Subsequently, in Section~\ref{sec:unitsphere}, we consider the case where $x\in \V_{m,n}^m$ and assume that all support vectors have the same norm. Recall that in this case our recovery problem is equivalent to the unlabeled distance geometry problem where the point configuration to be recovered is contained completely inside the unit sphere. We adapt our main algorithm developed in Section~\ref{sec:noiseless} to this setting and illustrate its successful performance while still retaining a slight edge over the time complexity of the algorithm in \cite{duxbury2016unassigned} (though this edge vanishes as $n\to \infty$). Finally, in Section~\ref{sec:noisyalg} we go back to the relaxed setting (with supports having at least one point with distinct norm) and extend our main algorithm to perform recovery even when the unlabeled distances are corrupted with low levels of noise. \\

%\arun{[[Another Paragraph environment]]}
\paragraph{{\bf Connection to the Tribond algorithm~\cite{duxbury2016unassigned}.}}
For a fixed Euclidean dimension $n$, \cite[Theorem 4]{duxbury2016unassigned} introduces a polynomial time algorithm called the \textit{Tribond algorithm} that performs reconstruction of generic $n$ dimensional point configurations from unlabeled interpoint distances, assuming that the distances known are exact. The Tribond algorithm runs in time complexity $O(m^{n(n+2)-2}\ln(m))$ and proceeds by first by identifying the smallest rigid structure -- an $(n+2)$-clique, or equivalently, two $n$-simplices sharing a common $(n-1)$-dimensional base, bridged by an edge connecting the two remaining free vertices. Once such a small core is found, the algorithm builds iteratively by searching for compatible sets of $n+1$ distances that attach an additional vertex to the existing core, one step at a time until all distances are exhausted. By proceeding in this manner, the algorithm assumes that there is a unique point configuration within $\r^n$ which produced the unlabeled distance measurements and that any sub-structure obtained by the algorithm is a part of the final unique structure. The authors, in their subsequent work \cite{Billinge2016Assigned} call this property \textit{strongly generic}. Accordingly, it is true that distance distributions generated by generic configurations in $\r^n$ satisfy these assumptions. A related generalization of this property, called Trilateration, is explored in \cite{gkioulekas2024trilateration} in the context of rigid graphs. 

There are, however, two major considerations with this approach as it applies to our setup. First, the success of this algorithm relies on the assumption that the underlying configuration is generic in $\mathbb{R}^n$. However, such a configuration is almost surely radially collision-free, and consequently, the second moment $m_2(x)$ already contains enough information to uniquely recover the Gram matrix \cite[Theorem 3.1]{bendory2024beltway}. Moreover, it is not clear if the assumptions underlying the Tribond algorithm apply to our particular setup where multiple support vectors have the same norm. In addition, the most pressing limitation is computational. The bottleneck lies in testing all combinations of pairs of compatible $n-$simplices such that there exists a final distance that serves as an edge bridging the non-base vertices. This over-constrained search leads to large redundancies in our setting. In our setup, we enjoy the benefits of partial labeling pertaining to distances to the isolated point with distinct norm. This added flexibility allows us to propose an algorithm that has a time complexity bounded from above by $O(m^{8})$ for point configurations in $\r^3$, and more generally $O(n^3m^{n(n-1)+2})$ for point configurations in $\r^n$. This is an improvement to the exponent $n^2+2n-2$ seen in \cite{duxbury2016unassigned}, and notably a considerable improvement in the crucial case when $n=3$. Furthermore, for generic $x\in \V^{m-1}_{m,n}$, empirical evidence suggests that the performance of the algorithm is substantially below this upper bound.

\subsection{Recovering $x$ from exact $m_2(x)$ measurements}\label{sec:noiseless}
We begin with some setup. Let $\ell \leq m-1$ and let $x\in U^\ell \subseteq \mathcal{V}^\ell_{m,n}$ be a $\delta$-function with support $S = \{v_1,\dots, v_m\}$. For robustness, we may shrink $U^\ell$ to ensure that the support vectors $S = \{v_1,\dots, v_m\} \subseteq \mathbb{R}^n$ satisfy the condition that any three of them are linearly independent. This is equivalent to saying that given any ordering of the vectors in $S$, 
if $X=[v_1 \ldots v_m]$ then every $n \times 3$ submatrix of $X$ has full rank. A relevant consequence of this assumption is that any three support vectors, together with the origin are guaranteed to constitute the vertices of a tetrahedron having positive volume. Moreover, since $\ell \leq m-1$, we may shrink $U^\ell$ further to assume, without loss of generality, that $v_m$ is an isolated point with norm distinct from the remaining support vectors.

As observed in the proof of Lemma~\ref{lem.interpoint}, that the second moment $\t(A)$ contains enough information to extract the interpoint distance between $v_i$ and $v_j$ via the equation $d_{ij}^2 = \|v_i\|^2 + \|v_j\|^2 -2\ip{v_i}{v_j}$. Thus, the data of $\t(A)$ can be reorganized as a product of sets $D= \prod_{(r_1, r_2)} D(r_1, r_2)$, where
$$
D(r_1,r_2) = \{d_{ij}^2 \mid \|v_i\| = r_1,\ \|v_j\| = r_2\},
$$
indexed over pairs $(r_1,r_2)$ of support norms. To reduce notational clutter, we perform a similar reduction as in Section~\ref{sec:relaxationtom-1}, and assume, without loss of generality, that only two such classes occur $(r_1, r_1)$ and $(r_1,r_2)$. By rescaling $d_{ij}^2$ as needed, we may assume that $r_1=1$ and $r_2=r$. Recall that $\P_{m,n}^\ell$ denotes the set of $m \times m$ symmetric rank-$n$ psd matrices with at-least $\ell$ diagonal entries equal. 
%With this notation in place, we establish the following useful Lemma. 

\begin{Lemma}\label{lem:XTXYTY}
    Let $A_1, A_2$ be two matrices in $\mathcal{P}^{m-1}_{n,m} \setminus \mathcal{P}^{m}_{n,m}$ such that $A_2$ is a symmetric rearrangement of
    $A_1$. Then $A_1, A_2$ are equivalent to matrices of the form
    \begin{equation} \label{eq.p1p2} A_1 = \begin{bmatrix}
        P_1 & \alpha \\
        \alpha^T & r
    \end{bmatrix} \qquad A_2 = \begin{bmatrix}
        P_2 & \alpha \\
        \alpha^T & r
    \end{bmatrix}
    \end{equation}
    where 
    $\alpha= (a_1,\ldots , a_{m-1})^T \in \r^{m-1}$, $P_1$ has constant diagonal, and the $(m-1) \times (m-1)$ block $P_2$ is a (psd) symmetric rearrangement of the block $P_1$.
\end{Lemma}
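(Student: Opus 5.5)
The plan is to use the structure of the diagonal together with condition (2) of Definition~\ref{def.symmrearrangement} to pin down the row and column of the isolated point. Since $A_1 \in \mathcal{P}^{m-1}_{n,m}\setminus \mathcal{P}^m_{n,m}$, and (after the normalization above) only two norms occur, the diagonal of $A_1$ consists of $m-1$ entries equal to $1$ and a single entry equal to $r \neq 1$. Equivalence allows us to simultaneously permute rows and columns, i.e. reorder the columns of a factor $X$. We therefore reorder so that the distinct diagonal entry of $A_1$ sits in position $(m,m)$. Since $A_2$ is a symmetric rearrangement, it has the same diagonal as $A_1$, so its entry $r$ is also at $(m,m)$; no further permutation of $A_2$ is needed. (If one prefers to allow independent reorderings, reorder $A_2$ by the same permutation, which preserves both conditions of Definition~\ref{def.symmrearrangement}.) Thus both matrices have the block form with $P_i$ an $(m-1)\times(m-1)$ block with constant diagonal $1$ and last diagonal entry $r$.

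Next I would sort out the triples. The triples in $\mathcal{T}(A_1)$ split according to their diagonal labels. The triples of the form $(1,r,\cdot)$ are exactly $\{(1,r,(A_1)_{im})\}_{i<m}$. The triples of the form $(1,1,\cdot)$ are exactly the off-diagonal entries of $P_1$, and similarly for $A_2$. Because $\mathcal{T}(A_1)=\mathcal{T}(A_2)$ as sets, the last-column entries of $A_2$ are a rearrangement of those of $A_1$. Here I would use collision-freeness, which makes the entries distinct so that set equality gives a bijection; I need to make sure the sets are compared with multiplicity, which the generic collision-free assumption guarantees.

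I then choose a permutation $\sigma$ of $\{1,\dots,m-1\}$ with $(A_2)_{\sigma(i)m}=(A_1)_{im}$, and conjugate $A_2$ by $\sigma$ extended by fixing $m$. This is again an equivalence, corresponding to permuting the columns $w_1,\dots,w_{m-1}$. It keeps the diagonal, keeps $\mathcal{T}$, and makes the last column equal to $\alpha=((A_1)_{1m},\dots,(A_1)_{m-1,m})^T$. The off-diagonal entries of the new $P_2$ then form the same set of $(1,1,\cdot)$ triples as those of $P_1$. Both blocks have constant diagonal $1$, so $P_2$ is a symmetric rearrangement of $P_1$. Finally, $P_2$ is a principal submatrix of the psd matrix $A_2$, hence psd.

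The main obstacle is only bookkeeping: justifying that the set equality of triples yields a genuine bijection on entries (multiplicities), and that the conjugations used are equivalences in the paper's sense. Both follow from collision-freeness and from the fact that permuting the columns of the factor $Y$ realizes simultaneous row/column permutation.
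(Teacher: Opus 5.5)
Your proposal is correct and follows the paper's proof step for step. You move the distinct diagonal entry $r$ to position $(m,m)$ by applying the same permutation to both matrices, as your parenthetical says. You then use equality of the $(s,r,\cdot)$ triples to reorder $w_1,\dots,w_{m-1}$ so the last columns coincide, and conclude that the remaining $(s,s,\cdot)$ triples make $P_2$ a symmetric rearrangement of $P_1$. Two points the paper passes over silently are also handled: you get a genuine bijection on last-column entries (via collision-freeness, or by reading $\mathcal{T}$ as a multiset), and you note that $P_2$ is psd because it is a principal submatrix.
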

\begin{proof}
Since $A_1$ and $A_2$ are psd of rank $n$ we know that we can factor
$A_1 = X^\top X$ and $A_2 = Y^TY$ where $X = [v_1 \ldots v_m]$ and $Y = [w_1 \ldots v_m]$
for some vectors $v_1, \ldots , v_m, w_1, \ldots , w_m \in \r^n$. Since $A_1$ and $A_2$ have the same diagonals we know that after reordering the points to produce equivalent Gram matrices
we may assume that the diagonals of $A_1$ and $A_2$ are the same and the first $m-1$ diagonal entries
have constant value $s$. Since $A_1, A_2 \in \mathcal{P}^{m-1}_{n,m} \setminus \mathcal{P}^{m}_{m,n}$
we know that $(A_1)_{mm} = (A_2)_{mm}=r$ for some positive integer $r\neq s$. Moreover, 
since $A_2$ is a symmetric rearrangement of $A_1$ the set of triples $\{((A_1)_{ii}, (A_1)_{mm},(A_1)_{im}) =(s,r,(A_1)_{im})\}$
is the same as the set of triples $\{((A_2)_{ii}, (A_2)_{mm}, (A_2)_{im})= (s,r,(A_2)_{im})\}$. In particular
the first $m-1$ entries of the $m$-th column of $A_2$ are a permutation of the first
$m-1$ entries of the first column of $A_1$. The first $m-1$ entries of the $m$-th column of $A_1$ are the inner products
$\{\langle v_1, v_m \rangle, \ldots , \langle v_1, v_{m-1} \rangle \}$ while the the first $m-1$ entries
of the $m$-th column of $A_2$ are the inner products $\{\langle w_1, w_m \rangle, \ldots , \langle w_1, w_{m-1} \rangle\}$. So
after possibly reordering the points $w_1,\ldots , w_{m-1}$ which corresponds to replacing $A_2$ by an equivalent matrix we can assume that $(A_1)_{mi} = (A_2)_{mi}$ for $i =1 \ldots , m-1$. Note that this reordering does not affect the values along the diagonal. It follows that the matrices $A_1$ and $A_2$ have the form given in~\eqref{eq.p1p2}.
\end{proof}

This lemma, viewed in conjunction with Theorem~\ref{thm:mainthm} implies that upon extracting the values $\ip{v_i}{v_m}$ in $\t(A)$ and labeling them (in some order), there exists a unique way to arrange the remaining $\ip{v_i}{v_j}$ into an $m\times m$ matrix $A$ such that $\rank{A}\leq n$. Equivalently, by assigning labels to the $m-1$ interpoint squared distances $d_{im}^2$ in $D(1,r)$, there exists a unique labeling of $d_{ij}^2 \in D(1,1)$ that realizes them as interpoint distances of some point configuration in $S^{n-1}\subseteq \r^n$. Thus, at this stage, we ultimately have a partially labeled set of interpoint distances $D = D(1,1) \times D(1,r)$ where all the squared distances $d_{im}^2 \in D(1,r)$ are labeled, and our goal is reduced to labeling the squared distances in the set $D(1,1)$, or equivalently, the inner products $\ip{v_i}{v_j}$ with $i,j<m$.

\subsubsection{Pre-processing step}

%Our approach begins with a pre-processing step that selectively trims the search space. 
Suppose we are trying to recover
the Gram matrix associated to the support $S = \{v_1,\dots,v_m\}$ of a $\delta$-function $x \in U^{m-1}$ where $\|v_i\|=1$ for all $i<m$ and $\|v_m\|=r$,
and any three vectors are linearly independent.
%The pre-processing step is powered by the following observation: 
For any choice of $i,j<m$, the four points $T = \{0, v_i, v_j, v_m\}$ form a tetrahedron (of positive volume, by assumption) with side lengths \{$1$, $1$ ,$2$, $d_{jm}$, $d_{im}$ and $d_{ij}$\}. Since the distances in $D(1,r)$ are labeled, the values of $d_{im}$ and $d_{im}$ are known, and only the distance $d_{ij}$ remains unknown. The following crucial result
originally proved by Menger~\cite{menger1928untersuchungen} establishes the conditions required for six positive numbers to form the edge lengths of a tetrahedron. We use this result to reduce the possible values of $d_{ij}$ among the unlabeled interpoint distances of the vectors with magnitude one.
\begin{Theorem}{\cite[Theorem 3.1]{wirth2009edge}}\label{thm:cayleymenger} A list of six positive numbers $E = (a,b,c, x, y, z)$ form the edge lengths of a tetrahedron - with faces $(a,b,z)$, $(a, y, c)$, $(x,b,c)$ and $(x,y,z)$ - exactly when 
\begin{enumerate}
    \item $\min(a+b+c,a+y+z,x+b+z,x+y+c)>max(a+x,b+y,c+z)$, and 
    \item $D(E)$ is positive, where $D(E)$ is the so called Cayley-Menger determinant\footnote{The term Cayley-Menger determinant was coined by Blumenthal in his classic book on Euclidean distance geometry~\cite{blumenthal1953theory}.}
    \begin{equation} D(E) = \begin{vmatrix} \label{eq.cayleymenger}
        0 & a^2 & b^2 & c^2 & 1 \\
        a^2 & 0 & z^2 & y^2 & 1\\
        b^2 & z^2 & 0 & x^2 & 1\\
        c^2 & y^2 & x^2 & 0 & 1 \\
        1 & 1 & 1 & 1 & 0 
    \end{vmatrix} \end{equation}
\end{enumerate}
\end{Theorem}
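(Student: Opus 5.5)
The plan is to reduce the Cayley--Menger criterion to a statement about a $3\times 3$ Gram matrix and then use Sylvester's criterion. Label the prospective vertices $P_0,P_1,P_2,P_3$ with
\[
a=|P_0P_1|,\quad b=|P_0P_2|,\quad c=|P_0P_3|,\quad x=|P_2P_3|,\quad y=|P_1P_3|,\quad z=|P_1P_2|.
\]
Then the faces $(a,b,z)$, $(a,y,c)$, $(x,b,c)$, $(x,y,z)$ are $P_0P_1P_2$, $P_0P_1P_3$, $P_0P_2P_3$, $P_1P_2P_3$, and $a+x$, $b+y$, $c+z$ are the sums of the pairs of opposite edges.

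\emph{Step 1: condition 1 is exactly the list of strict triangle inequalities on the four faces.} Each comparison between a face perimeter and an opposite-pair sum shares exactly one edge, and cancelling it leaves a triangle inequality on a face. For example:
\begin{itemize}
\item $a+b+c>a+x$ is $b+c>x$ on face $(x,b,c)$;
\item $a+b+c>b+y$ is $a+c>y$ on face $(a,y,c)$;
\item $a+b+c>c+z$ is $a+b>z$ on face $(a,b,z)$;
\item $x+b+z>a+x$ is $b+z>a$ on face $(a,b,z)$.
\end{itemize}
Running through all $4\times 3=12$ comparisons produces each of the $12$ strict triangle inequalities of the four faces exactly once. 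So condition 1 holds iff every face is a nondegenerate triangle.

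\emph{Step 2: the algebraic identity.} For given numbers define a symmetric matrix $G$ (the would-be Gram matrix of $u_i=P_i-P_0$, $i=1,2,3$) by
\[
G_{ii}=\ell_i^2,\qquad G_{ij}=\tfrac12(\ell_i^2+\ell_j^2-d_{ij}^2),
\]
where $(\ell_1,\ell_2,\ell_3)=(a,b,c)$ and $d_{12}=z$, $d_{13}=y$, $d_{23}=x$. I claim
\[
D(E)=8\det G.
\]
To prove it, in the $5\times5$ determinant subtract the second row from rows $3,4$ and the second column from columns $3,4$, and then use the last row and column of ones to clear the remaining $a^2$ terms. This is the routine but sign-sensitive computation, and I expect it to be the main place to be careful. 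Getting the constant and the sign right (positive, equal to $288V^2$ in dimension $3$) is where errors creep in, so I would double-check it on the regular tetrahedron.

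\emph{Step 3: necessity.} If the numbers are the edge lengths of a tetrahedron of positive volume, then each face is a nondegenerate triangle, which gives condition 1 by Step 1. Moreover $u_1,u_2,u_3$ are linearly independent, so $G$ is positive definite, $\det G>0$, and $D(E)>0$ by Step 2.

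\emph{Step 4: sufficiency.} Assume conditions 1 and 2. Check the leading principal minors of $G$:
\begin{itemize}
\item $G_{11}=a^2>0$;
\item $a^2b^2-\tfrac14(a^2+b^2-z^2)^2$ equals $4$ times the squared Heron area of the triangle $(a,b,z)$, i.e.\ $\tfrac1{4}(a+b+z)(-a+b+z)(a-b+z)(a+b-z)$, which is positive by the triangle inequalities on face $(a,b,z)$ from Step 1;
\item $\det G=D(E)/8>0$.
\end{itemize}
By Sylvester's criterion $G$ is positive definite, so $G=U^\top U$ for some invertible $3\times3$ matrix $U=[u_1\,u_2\,u_3]$ (Cholesky). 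Set $P_0=0$ and $P_i=u_i$. Then $|P_i|=\ell_i$, and by construction $|u_i-u_j|^2=\ell_i^2+\ell_j^2-2G_{ij}=d_{ij}^2$. The volume is $|\det U|/6>0$, so these points are the vertices of a tetrahedron with the prescribed edge lengths and faces.
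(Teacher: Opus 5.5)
The paper does not prove this theorem. It quotes it from Wirth--Dreiding (the result goes back to Menger) and uses it as a black box, so there is no argument in the paper to compare with. Your Gram-matrix proof is correct and self-contained.

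Two small repairs are worth making.

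\emph{Step~1.} The sums $a+b+c$, $a+y+z$, $x+b+z$, $x+y+c$ are not face perimeters. They are the sums of the three edges at $P_0,P_1,P_2,P_3$ respectively. Your bijection survives unchanged: each vertex star meets each opposite pair in exactly one edge, and cancelling that edge leaves the other two edges at the vertex against the third side of the face they span.

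\emph{Step~2.} The elimination you sketch subtracts the second row and column, which rebases the computation at $P_1$ and does not produce $G$. Instead subtract the first row from rows $2,3,4$ and the first column from columns $2,3,4$. Then the middle $3\times 3$ block becomes $-2G$, the last row and column become $(1,0,0,0,0)$, and the first row and column carry $a^2,b^2,c^2$. Expanding along the last row and then the last column gives $D(E)=-\det(-2G)=8\det G$, so no numerical check on the regular tetrahedron is needed.

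What your route buys over the citation is a slightly sharper statement. Sufficiency uses only the triangle inequalities of a single face together with $D(E)>0$. Since Sylvester's criterion may be applied after any reordering of $u_1,u_2,u_3$, any face through $P_0$ can play the role of $(a,b,z)$, and the remaining inequalities in condition~1 then follow from positive definiteness of $G$.

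This matters for the paper's application $E=(1,1,r,d_{im},d_{jm},z)$ in Lemma~\ref{lem:cayleymengerlemma}. There the faces through the origin not containing $z$ are genuine triangles with vertices $0$, $v_m$, and one of $v_i,v_j$, so $D(E)>0$ alone characterizes the admissible $z$. The pruning only needs necessity, i.e.\ $D(E)=288V^2>0$, which is your Step~3. Your version also justifies the lemma's tacit claim that every $z$ strictly between the two positive roots is realized by a tetrahedron, without checking condition~1 separately.
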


With this theorem in place, we obtain a way to bound $d_{ij}$ from above and below. 

\begin{Lemma}\label{lem:cayleymengerlemma}
    Let $S$ be the support of $x \in U^\ell \subseteq \mathcal{V}_{m,n}$ with known distance data $D=D(1,1)\times D(1,r)$. For any $i,j<m$,  $\alpha_{ij} \leq d_{ij} \leq \beta_{ij}$ where $\alpha_{ij}, \beta_{ij}$ are the two positive roots 
    of the Cayley-Menger polynomial $f(z) = D(1,1,r,d_{im}, d_{jm}, z)$.
\end{Lemma}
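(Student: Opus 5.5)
The plan is to read off the tetrahedron $T=\{0,v_i,v_j,v_m\}$ from the six edge lengths $(1,1,r,d_{im},d_{jm},d_{ij})$ and to study $f$ as a function of the single unknown $t=z^2$. Every entry of the Cayley--Menger matrix \eqref{eq.cayleymenger} is already a squared length. The unknown $z^2$ occurs in exactly two symmetric positions, so $f$ is a polynomial of degree at most $2$ in $t$, and in particular $f$ is even in $z$. Its positive roots in $z$ are therefore the square roots of its positive roots in $t$. It thus suffices to show the following: as a function of $t$, $f$ is a concave quadratic with two real roots $0<\alpha^2\le\beta^2$, and $d_{ij}^2$ lies between them.

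\textbf{Step 1 (sign at the true value).} Use the classical identity $D(E)=288\,\mathrm{Vol}(T)^2$ (with the sign convention for which Theorem~\ref{thm:cayleymenger} asserts positivity). By our standing assumption on $U^\ell$, the vectors $v_i,v_j,v_m$ are linearly independent, so $T$ has positive volume and $f(d_{ij})>0$.

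\textbf{Step 2 (shape of $f$ in $t$).} I would avoid expanding the determinant and argue geometrically instead. Keep the triangle $(0,v_i,v_m)$ fixed and rotate the triangle $(0,v_j,v_m)$ rigidly about the line through $0$ and $v_m$ by a dihedral angle $\theta$. These triangles are nondegenerate by the same linear-independence assumption. Writing $h_i,h_j>0$ for the distances of $v_i,v_j$ from that axis, one gets
\[
|v_i-v_j(\theta)|^2 = K - 2h_ih_j\cos\theta
\]
for a constant $K$. The volume is proportional to $h_ih_j|\sin\theta|$, so
\[
288\,\mathrm{Vol}^2 = c\,h_i^2h_j^2\sin^2\theta = \tfrac{c}{4}\bigl(t-(K-2h_ih_j)\bigr)\bigl((K+2h_ih_j)-t\bigr)
\]
with $c>0$, where $t=|v_i-v_j(\theta)|^2$. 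Both sides are polynomials in $t$ agreeing on the interval $[K-2h_ih_j,K+2h_ih_j]$, hence they agree identically. Consequently
\[
f = \tfrac{c}{4}(t-\alpha^2)(\beta^2-t),
\qquad \alpha^2=K-2h_ih_j,\quad \beta^2=K+2h_ih_j .
\]
These two values are exactly the squared distances in the two planar positions ($\theta=0,\pi$). Since $d_{ij}^2$ is attained at some $\theta$, we get $\alpha^2\le d_{ij}^2\le\beta^2$, which is the claimed inequality.

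\textbf{Step 3 (positivity of the roots).} The value $\beta^2>\alpha^2$ is clearly positive. It remains to show $\alpha^2=|v_i-v_j(0)|^2>0$, i.e.\ that in the coplanar position on the same side of the axis, $v_j(0)\neq v_i$. This follows because $v_j(0)=v_i$ would force $d_{im}=d_{jm}$. That equality fails on a Zariski open set, so we may shrink $U^\ell$ to exclude it; alternatively, if $\alpha=0$ is allowed, the inequality $\alpha_{ij}\le d_{ij}$ is trivial anyway.

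The main obstacle is the identification of $f$ with the explicit concave quadratic in Step 2, including the sign conventions and the normalizing constant. The dihedral-angle parametrization handles this cleanly, since it exhibits $f$ as a positive multiple of $(t-\alpha^2)(\beta^2-t)$ without expanding the $5\times5$ determinant. As a cross-check, the coefficient of $z^4$ in the standard expansion of $144V^2$ is $-(\text{opposite edge})^2=-r^2<0$, confirming concavity.
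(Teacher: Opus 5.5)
Your proof is correct, but it takes a different route from the paper.

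The paper argues algebraically from the numbers $(r,x,y)$. In Proposition~\ref{prop:cayleymengerprop} it expands $f$ as a quadratic in $t=z^2$ with leading coefficient $-2r^2$ and constant term $-2(x^2-y^2)^2$. It shows the linear coefficient is nonnegative by an extreme-value analysis on the square $[|r-1|,r+1]^2$, asserts positivity of the discriminant by a ``routine computation'', and applies Vieta. The bracketing of $d_{ij}$ is then read off from Menger's criterion (Theorem~\ref{thm:cayleymenger}) together with concavity.

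You instead parametrize the one-parameter family of tetrahedra with five fixed edges by the dihedral angle. This gives the explicit factorization $f=2r^2(t-\alpha^2)(\beta^2-t)$, where $\alpha^2,\beta^2$ are the squared distances in the two flattened positions. Your constant works out to $c=8r^2$, consistent with the paper's leading coefficient. What this buys you is that real roots, a positive discriminant (it equals $4r^4(\beta^2-\alpha^2)^2$), and $\alpha^2\le d_{ij}^2\le\beta^2$ all come for free. Neither Menger's theorem nor your Step~1 is actually needed.

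Your route also isolates the degenerate case exactly: $\alpha=0$ iff $d_{im}=d_{jm}$, which is precisely where the paper's constant term vanishes. The paper's lemma proof does not address this case. It cites the proposition under non-strict bounds $|r-1|\le d_{im},d_{jm}\le r+1$, whereas the proposition assumes strict inequalities and $x\neq y$. The case is, however, already excluded by the paper's standing collision-free assumption, since $d_{im}=d_{jm}$ would make $\{v_i,v_m\}$ and $\{v_j,v_m\}$ orthogonally equivalent. So your shrinking of $U^\ell$ costs nothing.

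The paper's formulation has the merit of referring only to the numerical data, not to an actual configuration. Your argument adapts to that setting as well. Erect nondegenerate triangles with sides $(1,r,y)$ and $(1,r,x)$ on a common edge of length $r$, which the strict inequalities permit, and rotate one about that edge.
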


\begin{proof}
By definition the polynomial $f(z) = D(1,1,r,d_{im},d_{jm}, z)$ is biquadratic in the variable $z$. Moreover, $|r-1| \leq d_{im}, d_{jm} \leq r+1$ which forces the biquadratic form $f(z)$ to have four real roots (see Proposition~\ref{prop:cayleymengerprop} in Appendix \ref{ap:biquadratic}). As such, $f(z)$ admits exactly two positive roots, which are precisely the desired upper and lower bounds on the distance $d_{ij}$ which ensure that $E = \{1, 1,r, d_{im}, d_{jm}, d_{ij}\}$ constitutes the set of edge lengths of a tetrahedron.
\end{proof}

For each $(i,j)$ with $i,j<m$, the pre-processing step, outlined in Algorithm \ref{alg:preprocessing} below, uses the bounds from Lemma~\ref{lem:cayleymengerlemma} to restrict the number of possible values the Gram matrix entry $A_{ij}$ may take. It achieves this by constructing an ambiguity list for each pair $(i,j)$ which contains all admissible values of $A_{ij}$, namely those inner products $\ip{v_i}{v_j}$ whose associated distances $d_{ij}$ satisfy $\alpha_{ij} \le d_{ij} \le \beta_{ij}$. This reduces the search space by restricting every $A_{ij}$ to a limited set of possibilities governed by the Cayley-Menger bounds, with some $(i,j)$ typically having very substantial restriction in the number of possibilities (see Figure~\ref{fig:minvalid}).

\begin{algorithm}[h]
\caption{Pre-Processing step via Cayley-Menger determinants}\label{alg:preprocessing}
\KwIn{Number of points $m$, dimension $n$, second moment $m_2(x) = \t(X^\top X) = \{(a_{ii}, a_{jj}, a_{ij})\}_{i,j =1}^m$
where $a_{ii} = \|v_i\|^2$, $a_{jj} = \| v_j \|^2$ and $a_{ij} = \langle v_i , v_j \rangle$.}
\KwOut{Dictionary $\valid$ where $\valid[(i,j)] = $ \{feasible $a_{ij}$ for the $(i,j)$ coordinate of $X^\top X$\}.}
\BlankLine 
\BlankLine 

Set $D = \{d_{ij}^2\}_{i,j=1}^m$ where $d^2_{ij} = a_{ii} + a_{jj} - 2a_{ij}$ \Comment{Note: $D$ is a set of unlabeled distances}

\BlankLine

\ForAll{$(i,j)$, $i,j < m$}{
  Initialize \valid[$(i,j)$] $\gets [ \, ]$

  $\alpha, \beta \gets \texttt{ComputeCayleyMengerBounds(}a_{ii}, a_{jj}, a_{mm}, d_{im}, d_{jm}\texttt{)}$
  
  Set \valid[$(i,j)$] $\gets [a_{ij} \, | \, \alpha \leq d_{ij} \leq \beta]$
  
}
\textbf{Output} \valid

\BlankLine 
\BlankLine 

\SetKwFunction{FMain}{ComputeCayleyMengerBounds}
    \SetKwProg{Fn}{Function}{:}{}
    \Fn{\FMain{$a_{ii}, a_{jj}, a_{mm}, d_{im}, d_{jm}$}}{
    \BlankLine
    Build the Cayley-Menger matrix
 
    $$M = \begin{bmatrix}
        0 & a_{ii} & a_{jj} & a_{mm} & 1 \\
        a_{ii} & 0 & z^2 & d_{im}^2 & 1 \\
        a_{jj} & z^2 & 0 & d_{jm}^2 & 1\\
        a_{mm} & d_{im}^2 & d_{jm}^2 & 0 & 1\\
        1 & 1 & 1 & 1 & 0
    \end{bmatrix}$$

    Set $p(z) \gets \det(M)$ \Comment{$\deg(p(z)) = 4$ and has two roots $> 0$}
    
    Solve $p(z) = 0$ to find the two positive roots $\alpha, \beta$ (with $\alpha < \beta$)}
    
    \textbf{return} $\alpha, \beta$

\end{algorithm}

\subsubsection{Main algorithm}

With the pre-processing step in place, we are now ready to describe the main algorithm. The algorithm takes as input the dictionary $\valid$ produced in the previous stage, where for each pair $(i,j)$, $\valid[(i,j)]$ encodes the list of admissible values for the Gram matrix entry $A_{ij}$. The key idea behind the algorithm is to exploit Theorem \ref{thm:mainthm}, which guarantees that for any choice of indices $\iota = (i_1,\dots, i_{n})$ with $i_1,\dots, i_n < m$, there is a unique subset of $\t(A)$ and a unique arrangement of values $\ip{v_i}{v_j}$ in this subset into an $n \times n$ matrix $P$ such that 
$$\mathrm{rank} \begin{bmatrix}
    P & \beta \\
    \beta^T & r
\end{bmatrix} \leq n,$$
where $\beta^T = [\ip{v_{i_k}}{v_m}]_{k=1}^n$. This unique matrix $P$ is, by construction, a principal submatrix of $A$. Proceeding iteratively, we recover the Gram matrix $A$ once all principal submatrices are exhausted. To optimize for efficiency, we maintain three sets: 
\begin{enumerate}
    \item $\used$, to store fixed entries $a_{ij}$ of $A$
    \item $\usedind$, to store fixed indices $(i,j)$ of $A$
    \item $\counts$, indexed by $n-$tuples $\iota = (i_1,\dots, i_n)$, to store the total number of rank checks 
    \begin{equation}
    c_{\iota} = \prod_{1\leq r<s \leq n} |\valid[(i_r,i_s)]|        
    \end{equation}
required to determine the correct principal submatrix indexed by $\iota$. 
\end{enumerate}

In each iteration, the algorithm first chooses the index $\iota = (i_1,\dots, i_n)$ that minimizes $\counts$. Subsequently, it identifies the unique tuple $(a_{i_ri_s}^* \, | \, i_r<i_s \, i_r,i_s \in \iota)$ in 
\begin{equation}
\mathbb{V}_\iota = \prod_{1\leq r<s \leq n}\valid[(i_r,i_s)]    
\end{equation}
 so that the $(n+1) \times (n+1)$ matrix $M_{\iota,m} = \mathrm{diag}(\|v_{i_r}\|^2 \, | \, i_r \in (\iota, m)) + \mathrm{Matrix}(a_{i_ri_s}^* \, | \, i_r,i_s \in (\iota, m))$ has rank at most $n$. By theorem~\ref{thm:mainthm}, $M_{\iota,m}$ has to be the principal submatrix of $A$ indexed by $(\iota, m)$. The algorithm now fixes these entries in $A$. The resolved entries are then removed from $\t(A)$, added to $\used$, and their indices transferred to $\usedind$. Next, $\valid[(i_r,i_s)]$ is collapsed to the correct value $A_{i_ri_s}$ for each $(i_r,i_s) \in \iota \times \iota$. Finally, $\counts$ is updated to reflect the reduced number of checks required to determine the remaining terms of $A$. Algorithm \ref{alg:main} outlines this procedure below.

\begin{algorithm}[h]
\caption{Recovering support $X \in \V_{m,n}$ of a $\delta-$function $x$}\label{alg:main}
\KwIn{Dictionary $\valid$ generated in algorithm \ref{alg:preprocessing}, number of points $m$, vector $\alpha$ and diagonal of norms $d$.}
\KwOut{Support configuration $X$ (upto orthogonal translate).}

\BlankLine
\textbf{Step 1: Initialize}
\BlankLine
Set $G = \mathrm{diag}(d)$

Set last row and column of $G$ to $[\alpha, \, \|v_m\|^2]^T$.

Initialize $\used \gets \emptyset$ 

Initialize $\usedind \gets \emptyset$

Initialize $\counts \gets \{\}$ \Comment{$\counts$ is a dictionary}
\BlankLine
\textbf{Step 2: Iterative assembly}
\BlankLine

Set $\counts \gets \texttt{UpdateCounts}(\counts, \, \usedind)$

\While{$\counts \neq \varnothing$}{

Set $\iota \gets \arg \min(\counts[\iota'])$
  
  \ForEach{$(a_{i_ri_s} | i_r < i_s, \, i_r, i_s \in \iota) \in \mathbb{V}_\iota$}{
  \BlankLine
    Set $M \gets \mathrm{diag}(d_{i_r} \, | \, i_r \in (\iota,m)) + \mathrm{Matrix}(a_{i_ri_s} | \, i_r,i_s \in (\iota, m)) $
    \BlankLine
    \If{$\mathrm{rank}(M) \leq n$}{
      Set $G_{ij} \gets a_{ij}$, $G_{ik} \gets a_{ik}$, $G_{jk} \gets a_{jk}$

      Update $\used$,  $\usedind$,  $\valid$
      
      Set $\counts \gets \texttt{UpdateCounts}(\counts, \, \usedind)$ \Comment{Note: $(i,j,k) \not\in \counts$ after update}
      
      \textbf{break}
    }
  }
}
\BlankLine 
\textbf{Output }$\texttt{Cholesky}(G)$.

\BlankLine 
\BlankLine

\BlankLine

\SetKwFunction{FMain}{UpdateCounts}
    \SetKwProg{Fn}{Function}{:}{}
    \Fn{\FMain{$\counts, \usedind$}}{
    \BlankLine
    \ForEach{$(i,j,k) \not\in \usedind$}
    {Set $\counts[(i,j,k)] \gets  c_\iota$} 
    
    \textbf{return} Counts
}
\BlankLine

\end{algorithm}

\subsubsection{Discussion}\label{sec:discussion}

We dedicate this section to analyzing the time complexity of Algorithms \ref{alg:preprocessing} and \ref{alg:main}. The complexity of Algorithm \ref{alg:preprocessing} follows directly from its structure. The algorithm iterates over all pairs $(i,j)<m$, amounting to $\binom{m}{2}=O(m^2)$ iterations. Since each evaluation of \texttt{ComputeCayleyMengerBounds} runs in constant time and the insertion of admissible values into $\valid[(i,j)]$ can be performed in $O(\log m)$ time (via binary search), the overall running time is $O(m^2\log m)$. 

Ignoring the effects of dynamic updates, we can derive a weak upper bound for the time-complexity of Algorithm \ref{alg:main}. The algorithm proceeds iteratively by picking the $n-$tuple $\iota$ requiring the minimum number of rank checks - which can be performed in $O(n\log(m))$ time using a binary search. Once the tuple $\iota$ has been determined, the number of candidates (for the correct minor $M$) that must be checked is given by
$$c_{\iota} = \prod_{\substack{i_r, i_s \in \iota \\ i_r < i_s}} |\valid[(i_r, i_s)]|.$$ In the worst case—if the pre-processing step fails entirely—each $\valid[(i_r,i_s)]$ has size $O(m^2)$, yielding $c_\iota = O((m^{2{\binom{n}{2}}}) = O(m^{n(n-1)})$ candidates for the correct principal submatrix $M_{\iota, m}$. Since verifying the rank of an $(n+1)\times(n+1)$ matrix requires $O(n^3)$ operations, each $n$-tuple $\iota$ requires $O(n^3 m^{n(n-1)})$ computations. Finally, upon identifying the correct $M_{\iota, m}$, we update the dictionaries $\valid$ and $\counts$. Since they both contain on the order of $\binom{m}{n} \sim m^n/n!$ elements, updates to them can be made in $O(m^n)$ time. Thus, each iterate of our algorithm can be performed in $O(n\log(m) + n^3m^{n(n-1)} + 2m^n) = O(n^3m^{n(n-1)})$ time.

Finally, if we assume $\binom{n}{2}$ entries of $A$ indexed by $\iota$ have already been determined in any first iteration, then the next $n-$tuple of indices $\iota'$ that minimizes $\counts$ will have to differ from $\iota$ by a single index, say $i_s$. As a result, there are precisely $n-1$ new entries of the Gram matrix $A$ that will be determined in the minor involving $\iota'$: namely $a_{i_ri_s}$ for all $r\neq s \in \iota'$. This pattern persists until all principal minors are exhausted. Consequently, the full determination of $A$ requires a total of $O(m^2/n)$ iterations. Combining these estimates, we see that the overall complexity is bounded from above by $O(n^2m^{n(n-1)+2})$. \\

\subsubsection{Numerical Experiments}

From the above complexity analysis, it is evident that much of the burden in the theoretical bound for Algorithm \ref{alg:main} stems from the assumption that the pre-processing step completely fails — a scenario that rarely occurs in practice. For example
for random configurations in $\V^{m-1}_{m,n}$ with $m>n$ where $|v_i| = 1$ for $i < m$ and $|v_{m}| = 2$, interpoint distances in $D(1,1)$ concentrate in the interval $[1,2]$ due to spherical sampling constraints. In particular, the inner products are typically small $\leq 0.5$ forcing distances in $D(1,1)$ to satisfy $2 \geq d_{ij}^2 = 2 - 2\ip{v_i}{v_j} \geq 1$. This effect is illustrated in Figure \ref{fig:distvals}, showing the distance distribution for a cloud of $100$ uniformly distributed points on the unit sphere in $\r^n$ for $n=3,5,10,20$ where we take $v_m = 2e_1$\footnote{Note
that we can always appply an orthogonal transformation to ensure that $v_m = r e_1$ where $r= |v_m|$.} 

\begin{figure}[h]
    \centering
    \begin{subfigure}[t]{0.23\textwidth}
        \centering
        \includegraphics[scale = 0.24]{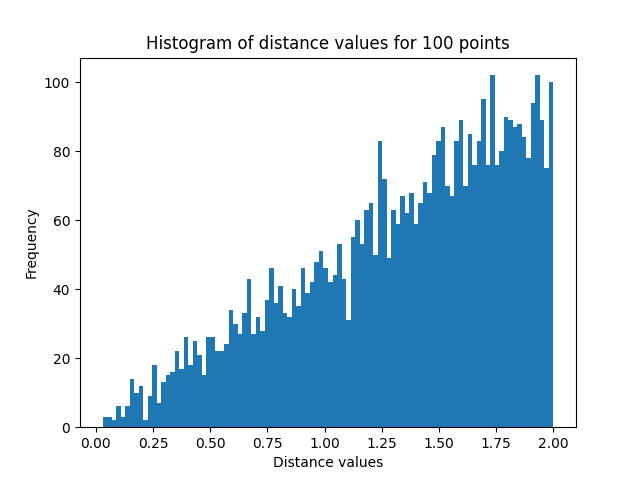}
        \caption{$n=3$}
        \label{fig:distvals3}
    \end{subfigure}%
    ~ 
    \begin{subfigure}[t]{0.23\textwidth}
        \centering
        \includegraphics[scale=0.24]{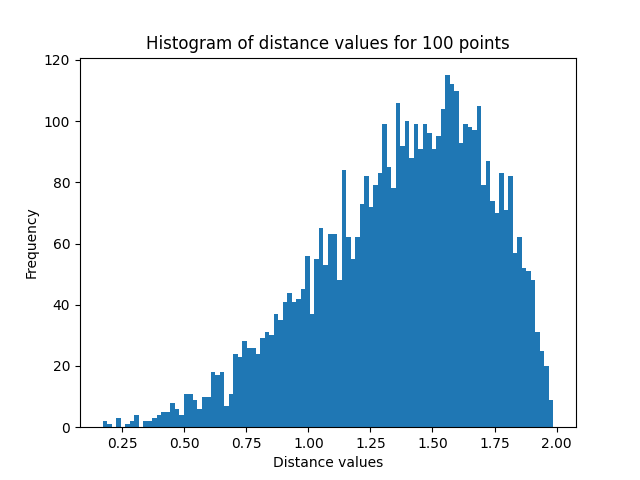}
        \caption{$n=5$}
        \label{fig:distvals5}
    \end{subfigure}
    ~
    \begin{subfigure}[t]{0.23\textwidth}
        \centering
        \includegraphics[scale=0.24]{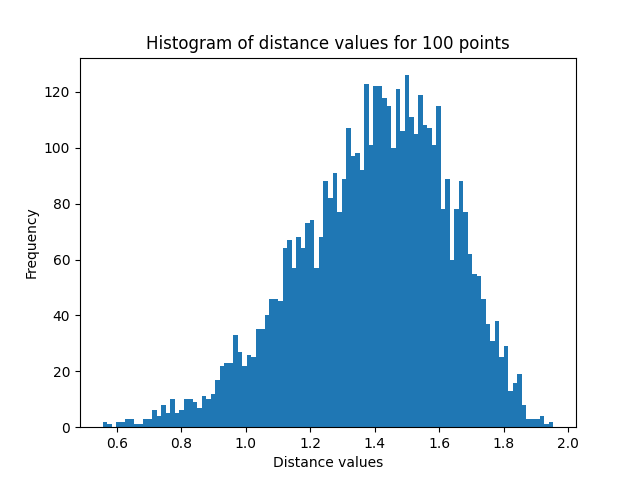}
        \caption{$n=10$}
        \label{fig:distvals10}
    \end{subfigure}
    ~
    \begin{subfigure}[t]{0.23\textwidth}
        \centering
        \includegraphics[scale=0.24]{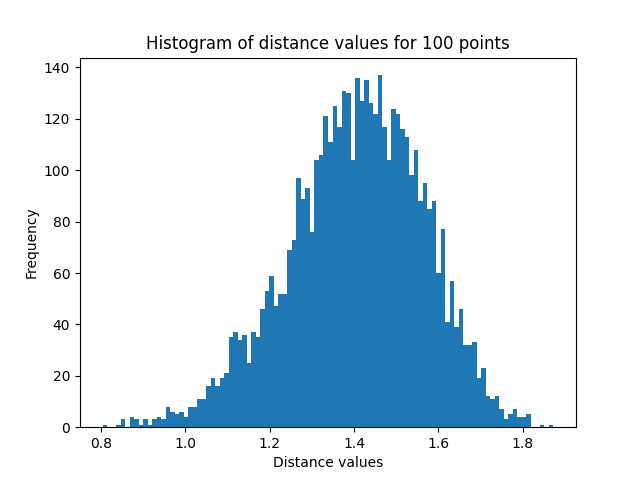}
        \caption{$n=20$}
        \label{fig:distvals20}
    \end{subfigure}
    \centering
    \caption{The distance distributions of a point-cloud with $100$ points, uniformly distributed in the sphere $S^{n-1}$ for $n=3,5,10,20$ respectively.}
    \label{fig:distvals}
\end{figure}
Given such a distance distribution over $[0,2]$, the Cayley–Menger bounds $\alpha_{ij}$ and $\beta_{ij}$ derived in Lemma~\ref{lem:cayleymengerlemma} further confine each $d_{ij}$ to a narrow sub-interval of $[0,2]$. In particular, distances in $D(1,1)$ that appear near the extremities of this range are forced to admit only a small number of feasible alternatives within the Cayley-Menger bounds, sharply reducing the effective search space. This fact paired with dynamic updates to the Gram matrix effectively forces the algorithm to converge to the correct Gram matrix $A$ rapidly - much faster than the naive upper bound dervied in Section~\ref{sec:discussion}. To empirically verify this, we performed the following numerical experiment:\\

For $m=4,\dots, 10$ we generated a total of $100$ random point clouds in $\r^3$ consisting of $m-1$ points distributed uniformly randomly on the unit sphere and the remaining point fixed at $2e_1$. Passing the second moment data $\t(A)$ of each of the point clouds into the pre-processing step (Algorithm \ref{alg:preprocessing}) we obtained ambiguity lists for each pair $(i,j)$ with $i,j < m$. Figure \ref{fig:minvalid} presents a plot comparing the average size of the smallest ambiguity list across all $100$ trials (for each $m$) and the theoretical worst-case scenario where the pre-processing step fails completely and each ambiguity list has size $\binom{m}{2}$. From this figure, it is clear that the pre-processing step \ref{alg:preprocessing} significantly narrows the search space, indicating the theoretical time complexity bound for Algorithm \ref{alg:main} derived in Section~\ref{sec:discussion} is overly conservative. The true performance of Algorithm~\ref{alg:main} is further illustrated in Figure \ref{fig:mincounts} which compares the average number of rank checks (across all $100$ trials) needed to recover the Gram matrix $A$, and the worst case theoretical upper bound derived in Section~\ref{sec:discussion}. In particular, we notice that the actual number of rank checks (on average) needed before convergence is about five orders of magnitude smaller than the estimated theoretical upper bound! This improvement is due to both the effectiveness of the pre-processing step and the dynamic updates performed to the Gram matrix, which progressively reduces the set of admissible principal submatrices and, consequently, the number of required rank checks. To illustrate this reduction more explicitly, consider a randomly selected representative trial (among the $100$) with $m=6$. After pre-processing, the initial dictionary of $\counts$ was determined to be 

\[
\counts = \left\{
\begin{array}{ll}
(0,1,2): 270, & (0,2,4): 270, \\
(0,1,3): 360, & (0,3,4): 324, \\
(0,1,4): 360, & (1,2,3): 720, \\
(0,2,3): 240, & (1,2,4): 810, \\
              & (1,3,4): 900, \\
              & (2,3,4): 648
\end{array}
\right\}
\]
The algorithm selected $\iota=(0,2,3)$ (the triple with the fewest rank checks needed, $240$), found the correct principal submatrix of $A$ indexed by $(\iota,m)$ and updated $\valid$ after fixing $A_{02},A_{03},A_{23}$. This reduced $\counts$, in the very next iterate, to
\[
\counts = \left\{
\begin{array}{ll}
(0,1,2): 28,  & (1,2,3): 56,  \\
(0,1,3): 32,  & (1,2,4): 392, \\
(0,1,4): 128, & (1,3,4): 448, \\
(0,2,4): 28,  & (2,3,4): 49,  \\
(0,3,4): 28   &
\end{array}
\right\}
\]
Three iterations later, the dictionary had shrunk to mostly single-digit counts. The algorithm successfully converged converged to the correct Gram matrix within $6$ iterations in total, staying well below our expectation of $6^2/3 = 12$ total iterations. Finally, Figure \ref{fig:numiter} confirms the expected trend of $O(m^2/n)$ iterations needed for convergence to the correct Gram matrix $A$.\\

\begin{figure}[h]
    \centering
    \begin{subfigure}[t]{0.31\textwidth}
        \centering
        \includegraphics[scale = 0.32]{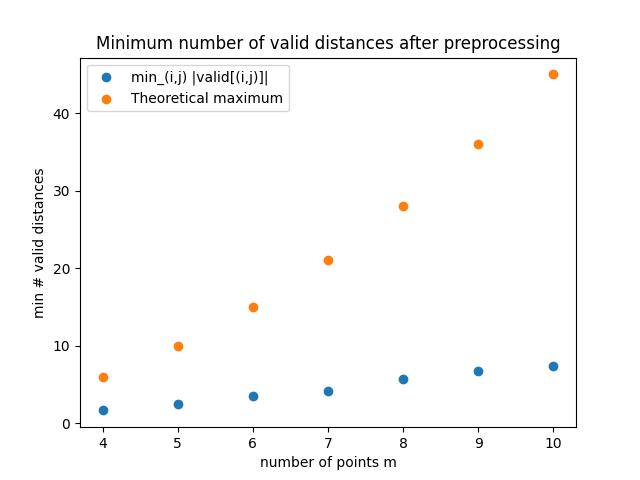}
        \caption{Size of smallest ambiguity list after the pre-processing  step vs. theoretical maximum $y(m) = \binom{m}{2}$}
        \label{fig:minvalid}
    \end{subfigure}%
    ~ 
    \begin{subfigure}[t]{0.31\textwidth}
        \centering
        \includegraphics[scale=0.32]{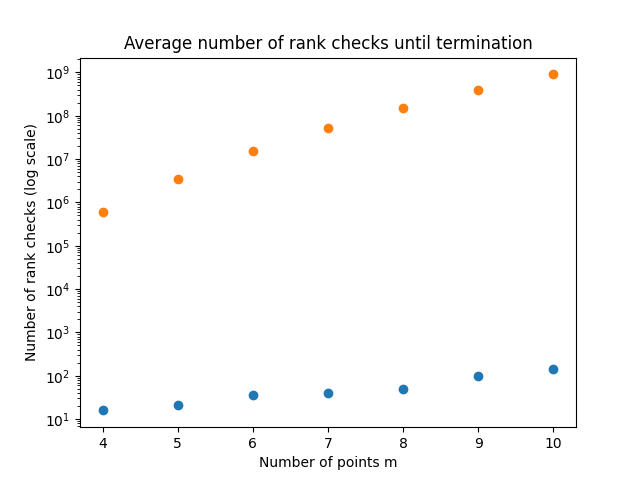}
        \caption{Average number of rank-checks needed to converge to the right Gram matrix vs. the theoretical maximum $y(m) = n^2m^{n(n-1)+2}$}
        \label{fig:mincounts}
    \end{subfigure}
    ~
    \begin{subfigure}[t]{0.31\textwidth}
        \centering
        \includegraphics[scale=0.32]{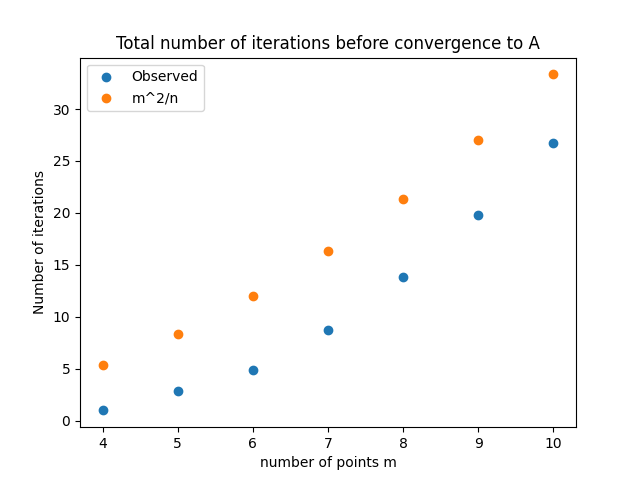}
        \caption{Average number of observed iterations before convergence vs. Expected number of iterations before convergence}
        \label{fig:numiter}
    \end{subfigure}
    \caption{Comparison of the observed performance of Algorithms \ref{alg:preprocessing}  and \ref{alg:main} vs. the theoretical upper bounds derived in Section \ref{sec:discussion}}
\end{figure}

These trends remain true in higher dimensions. Repeating the experiment with $100$ random point clouds in $\r^4$ for $m=5,6,7,8$, we observe from Figure \ref{fig:mincountsdim4} that convergence requires more rank checks than in three dimensions, reflecting the fact that each $4$-tuple $\iota$ involves $\binom{4}{2}=6$ coordinates $(i,j)$ of $A$ that need to be determined. For instance, even if the pre-processing step (Algorithm \ref{alg:preprocessing}) reduces each ambiguity list is to $\sim 10$ candidates, the resulting search in the main Algorithm \ref{alg:main} may demand $10^6$ rank checks. Nevertheless, Figure \ref{fig:mincountsdim4} demonstrates that, on average, the actual number of checks remains about five orders of magnitude below the theoretical upper bound of $O(m^{12})$. Finally, Figure \ref{fig:numiterdim4} shows that, owing to the reduced volume of high-dimensional spheres and the resulting constraints on inner products, the algorithm typically outperforms the expected $O(m^2/n)$ iteration bound, though the overall growth trend remains consistent.

\begin{figure*}[h]
    \centering
    \begin{subfigure}[t]{0.48\textwidth}
        \centering
        \includegraphics[scale=0.4]{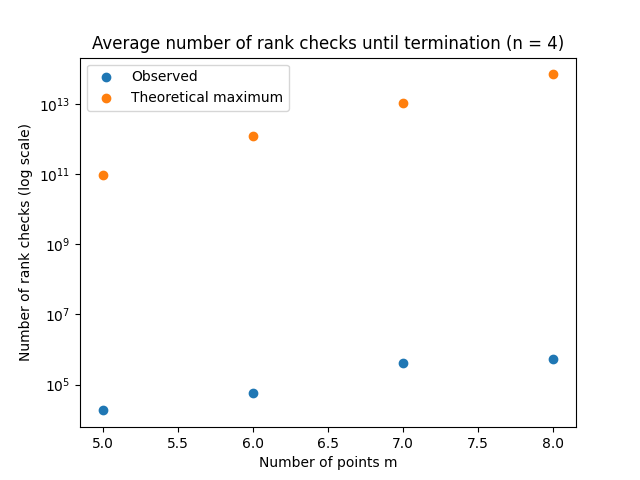}
        \caption{Average number of rank-checks needed to converge to the right Gram matrix vs. $y(m) = n^2m^{n(n-1)+2}$}
        \label{fig:mincountsdim4}
    \end{subfigure}
    ~
    \begin{subfigure}[t]{0.48\textwidth}
        \centering
        \includegraphics[scale=0.4]{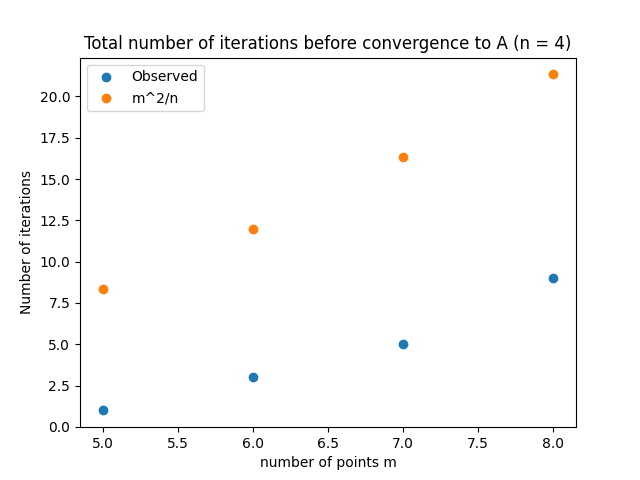}
        \caption{Average number of iterations before convergence}
        \label{fig:numiterdim4}
    \end{subfigure}
    \caption{Reconstructing Gram matrix $A = X^\top X$ using algorithm $\ref{alg:main}$ in $\r^4$}
\end{figure*}

\subsection{Signals with support on the unit sphere $S^{n-1}$}\label{sec:unitsphere}
This case is considerably more subtle. Given a binary $\delta$-function $x$ with support $S = \{v_1, \ldots , v_m\} 
\subset S^{n-1}$, the Gram matrix $A = X^\top X$ (where $X = [v_1 \ldots v_s]$ for any ordering of the points of $S$) admits $m!$ equivalent symmetric rearrangements. Thus we can deduce that there are $m!$ ways to arrange he inner products $\ip{v_i}{v_j}$ into an $m\times m$ positive-semidefinite matrix $M$ of rank at most $n$. Indeed, Theorem~\ref{thm:mainthm}, says that for generic support vectors $v_1, \dots, v_m\in S^{n-1}$ these are the only ways to obtain an $m \times m$ symmetric psd matrix from the set of inner products $\langle v_i, v_j \rangle$. Guided by this, we seek to adapt Algorithm~\ref{alg:main} to this setting as well.  Certainly, we no longer have access to the preprocessing step (Algorithm~\ref{alg:preprocessing}) since it relied crucially on distances to the isolated point. Thus, we are forced to consider Algorithm~\ref{alg:main} directly. The main changes are as follows: \\

Instead of considering index lists $\iota$ of size $n$, we now consider $\iota$ to be index lists of size $n+1$. We define the helper dictionaries $\valid$, $\used$, $\usedind$ and $\counts$ as before. We then initialize $G = \mathrm{diag}(\mathbbm{1}_m)$ and for each $(i,j)$  we set $\valid[(i,j)]$ equal to the list of all possible entries $[\ip{v_i}{v_j} | i<j]$. During each iterate we identify $\iota = \arg\min \counts[\iota']$ and for each
$$(a_{i_ri_s} | i_r<i_s,\, \, i_r,i_s \in \iota) \in \prod_{\substack{(i_r,i_s) \in \iota\\i_r<i_s}} \valid[(i_r,i_s)]$$
we set $M= \mathrm{Matrix}(a_{i_ri_s} | i_r,i_s \in \iota)$ before performing the rank check and proceeding iteratively as before by updating $\valid$ and $\counts$. 

The absence of the pre-processing step is felt through the unavoidable computational cost of $O(n^3m^{2{\binom{n+1}{2}}}) = O(n^3m^{n^2+n})$ in the initial iterations, where we have to iterate through all possible candidates for $M_\iota$ and perform a rank check. As such, in the initial stages, our algorithm performs the same order of computations as the time-complexity upper bound discussed in Section~\ref{sec:discussion}. However, just as before, the number of rank-checks needed in the later iterations go down drastically because of the dynamic updates to the Gram matrix. Moreover, with each correct minor being fixed in place, we are also implicitly fixing some component of the $S_m$ labeling ambiguity. Thus when enough minors are fixed in place, there is precisely one way to put the rest of the Gram matrix $X^\top X$ together. Once again, when a principal submatrix indexed by columns $\iota$ is fixed in place in the overall Gram matrix $X^\top X$, any subsequent index $\iota'$ that minimize counts will necessarily have non-trivial intersection with $\iota$ by design. Thus the algorithm naturally traverses its search space in a `depth-first fashion' resolving minors whose column indices overlap with those that have already been fixed in a previous iterate. This process continues until no such minor exists and the algorithm proceeds to the next branch. Running this adapted version of the algorithm for $m=6,\dots, 10$ always converges to the correct Gram matrix by design. The overall performance of this adaptation across all trials is illustrated in Figure~\ref{fig:rankchecksunitsphere} below.
    \begin{figure}[h]
        \centering
        \includegraphics[width=0.5\linewidth]{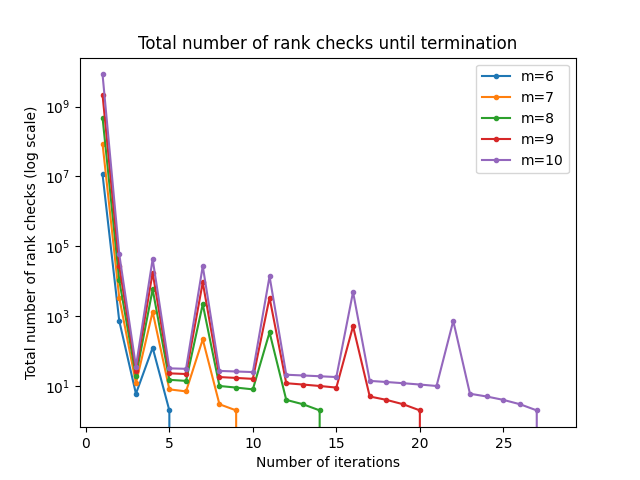}
        \caption{Number of rank checks needed on average to fix a principal submatrix $M_\iota$ vs. Number of iterations}
        \label{fig:rankchecksunitsphere}
    \end{figure}

%While we expect an analogue of Theorem~\ref{thm:mainthm} to hold when all support vectors have the same norm, our original proof does not directly extend to this case because it relies crucially on the presence of an isolated point whose inner-products with the remaining support vectors allowed us to label the remaining support vectors. When all vectors are constrained to lie on a common sphere, the second moment reduces to having a set of unlabeled inner-products $\ip{v_i}{v_j}$ and we no longer have the ability to perform such a labeling. \\

%Though this proof is currently unavailable, we have empirically verified this assertion for $n=3,4,5,6$ and $m=n+1,\dots, 7$. We achieved this by performing $100$ trials of uniformly sampling $m$ points $X \subseteq S^{n-1}$ and exhaustively checking all $\binom{m}{2}$ symmetric rearrangements of $X^\top X$ and observed that there were precisely $m!$ candidates with rank $\leq n$. This computational evidence served as further proof that the desired result must hold true in this case as well. 

%Guided by this evidence, we sought to adapt algorithm \ref{alg:main} to this setting as well.

\subsection{Recovering generic $x$ from noisy $m_2(x)$ measurements}\label{sec:noisyalg}
In this section we return to the case where $x\in \V_{m,n}^{m-1}$ and consider the situation where the second moment measurements $\t(A)$ \textit{are noisy}. As such, we seek to adapt Algorithm~\ref{alg:main} to produce the best approximation $G_X$ of the original Gram matrix $X^\top X$ from a
set of noisy perturbations of the triples in $\t(A)$. 
In our model we assume that the norms of the vector $v_i$ are unchanged but the inner products $\langle v_i, v_j \rangle$ are corrupted by noise. From a distance geometry perspective this leads to a relaxed formulation of the noisy unlabeled distance geometry problem in which we are given an unlabeled collection of noisy distances between points $v_i$ and $v_j$ for $i,j \in [m-1]$, together with a labeled set of noisy distances from each $v_i$, $i \in [m-1]$, to the isolated point $v_m$. 
%
%For such a reconstruction problem to be well posed, we require that if the perturbation alters the norm of a support vector $v$ in one triple of the second moment $\t(A)$, then it must alter it in the same way across all triples containing $\|v\|$. Given this requirement, we observe that if the noise alters the norm of a support vector $v$ in the fashion described above, then the corresponding quantities $\ip{v}{v_j}$ can be readily isolated by identifying triples of the form $(\|v\|^2, \|v_j\|^2, \ip{v}{v_j})$ from the noisy second moment $\t^{\text{noisy}}(A)$, leading to a simplification in the reconstruction problem. Thus, proceeding in a similar fashion as in Section~\ref{sec:relaxationtom-1}, we may reduce, without loss of generality, to the setting where the added noise provides no additional labeling information about the inner products $\ip{v_i}{v_j}$. Namely, we restrict our attention to the case where the noise perturbs only the inner products $\ip{v_i}{v_j}$, while leaving the norms $\|v_i\|^2$ unchanged for all $i,j \in [m]$. \\
%As before, the noisy unlabeled inner products $\ip{v_i}{v_j}^{\text{noisy}}$, now give rise to noisy interpoint distances via $\|v_i\|^2 + \|v_j\|^2 - 2\ip{v_i}{v_j}^{\text{noisy}}$. 
%This leads to a relaxed formulation of the noisy unlabeled distance geometry problem in which we are given an unlabeled collection of noisy distances between points $v_i$ and $v_j$ for $i,j \in [m-1]$, together with a labeled set of noisy distances from each $v_i$, $i \in [m-1]$, to the isolated point $v_m$. 

Despite its similarity to the setup in the noiseless case, we cannot apply Algorithm~\ref{alg:main} because, the availability of exact distances is crucial to the success of the algorithm. More precisely Algorithm~\ref{alg:main} always converges to the correct Gram matrix by iteratively identifying $(n+1)\times(n+1)$ principal submatrices of $A$ through a rank check. However, in the presence of measurement noise, the $(n+1)\times(n+1)$ principal submatrices will (with probability one) have full rank even with the correct arrangement of entries. Thus, a strict rank test no longer identifies the correct configuration. 

To extend Algorithm \ref{alg:main} to noisy input data, we use the Eckart–Young theorem which states that the $k$th largest eigenvalue of a symmetric matrix quantifies the distance from that matrix to the variety of matrices with rank $\leq k$. 
Thus, a natural extension of Algorithm \ref{alg:main} to the noisy case is obtained by replacing the rank check in step 2, with the computation of $\lambda_{n+1}(M_{\iota, m})$ which computes the distance of $M_{\iota,m}$ to the rank $\leq n$ locus. Among all candidate principal submatrices, the candidate minimizing this distance is chosen and fixed in the overall approximate Gram matrix $G_X$. Proceeding iteratively, we derive the noisy variant of the main algorithm. Algorithm~\ref{alg:noisymain} summarizes the corresponding modifications to step $2$ of Algorithm~\ref{alg:main}.

\begin{algorithm}[h]
\begin{flushleft}
\caption{Recovering approximate Gram matrix $G_X$  from noisy $m_2$. Noisy variant of Algorithm~\ref{alg:main} }\label{alg:noisymain}

\textbf{Step 2: Iterative assembly}
\BlankLine
Set $\counts \gets \texttt{UpdateCounts}(\counts, \, \usedind)$

\While{$\counts \neq \varnothing$}{

Set $\iota \gets \arg \min \counts[\iota]$
  
  \ForEach{$(a_{i_ri_s} | i_r<i_s, \, i_r,i_s\in \iota) \in \mathbb{V}_{\iota}$}{
  \BlankLine
    Set $M_{\iota, m} \gets \mathrm{Matrix}(a_{i_ri_s} | r,s \in \{\iota,m\})$
    \BlankLine
  }
    Set $M_{\iota^*} = \arg\min_{\iota} \lambda_{4}(M_{\iota,m})$
      
    Set $(G_X)_{i_ri_s} \gets a_{i_r^*i_s^*}$

    Update $\used$,  $\usedind$,  $\valid$
      
    Set $\counts \gets \texttt{UpdateCounts}(\counts, \, \usedind)$ \Comment{Note: $(i,j,k) \not\in \counts$ after update}
    }
\BlankLine 
\textbf{Output }$ G$.
\end{flushleft}
\end{algorithm}\,\\
Notice that if the underlying noise-level is zero, Algorithm~\ref{alg:noisymain} recovers the same output as that of the exact version presented in Algorithm~\ref{alg:main}. More precisely, this modification retains the overall structure of the algorithm while improving its robustness to small perturbations, allowing for reliable recovery even when the input data is noisy. When evaluating performance, we expect Algorithm~\ref{alg:noisymain} to be effective in regimes with low noise. In such settings, the correct candidate submatrix $M_{\iota,m}$ should consistently yield the smallest $\lambda_{n+1}$, enabling the correct assembly of $G_X$. However, when the noise is comparable in magnitude to the second moment measurements, multiple candidates may display nearly indistinguishable eigenvalue profiles. Consequently, an incorrect submatrix may exhibit a smaller minimal eigenvalue than the correct one, leading to spurious reconstructions.

\subsubsection{Numerical Experiment} To test the heuristic described above, we conducted the following numerical experiment. We considered $50$ different randomly generated point-clouds, each of $6$ points in $\r^3$. In each cloud, $5$ points were distributed uniformly on the sphere with radius $3$ and the last point was fixed at $4e_1$. To control the amount of imputed noise, we considered $30$ different choices for the variance $\sigma^2 \in [10^{-4}, 1]$ (equally spaced in log-scale) of the additive Gaussian noise. For each $\sigma^2$, we generated the noisy second moment measurements by computing $m_2^{\text{noisy}}(x) = \t(X^\top X + \mathcal{M})$ where $\mathcal{M}$ is an $6 \times 6$ matrix with zeros along the diagonal and its off-diagonal entries are random numbers $m_{ij} \sim \mathcal{N}(0,\sigma^2)$. With the noisy second moment generated, we ran our noisy reconstruction algorithm (Algorithm~\ref{alg:noisymain}) on these measurements to reconstruct the noisy matrix $G_X(\sigma^2) = X^\top X + \mathcal{M}$ which serves as an approximation of the low-rank Gram matrix $X^\top X$. We evaluated the performance of our algorithm by computing the proportion (out of $50$ trials) of successful reconstructions. This numerical experiment resulted in a total of $50 \times 30 = 1500$ total reconstructions. When parallelized across 8 CPU cores (without GPU support), the experiment took approximately $5$ minutes to complete, and the results of this numerical experiment are presented in Figure \ref{fig:noisyalg3} below.
\begin{figure}
    \centering
    \includegraphics[width=0.5\linewidth]{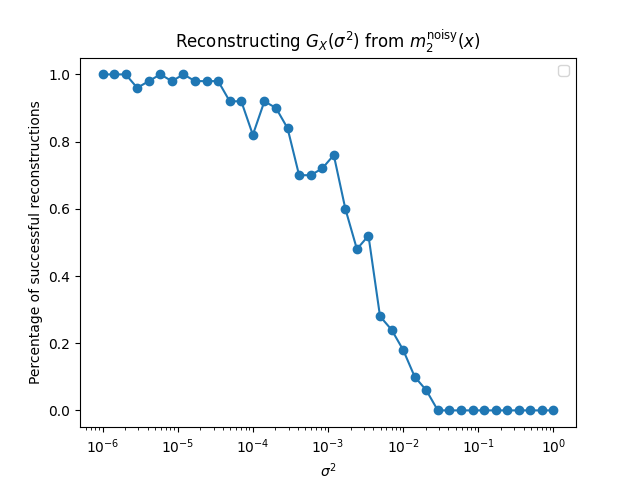}
    \caption{Reconstructing $G_X(\sigma^2)$ from $m_2^{\text{noisy}}(x)$}
    \label{fig:noisyalg3}
\end{figure}
From Figure~\ref{fig:noisyalg3}, it is clear that Algorithm~\ref{alg:noisymain} performs very well in the low-noise regime, achieving near-perfect reconstruction. It continues to perform reasonably well as long as the noise magnitude remains below about $1\%$ of the true (noiseless) measurements. However, its performance deteriorates sharply as $\sigma^2 \to 10^{-1}$, indicating failure once the noise level is about $10\%$ of the magnitudes of the true inner products $\langle v_i, v_j \rangle$. This behavior reveals a key limitation. As the noise level increases multiple candidate submatrices $M_{\iota, m}$ can attain minimal eigenvalues of similar magnitudes. In this regime, an incorrect arrangement of $M$ may end up closer to the rank $\leq n$ locus than the corresponding principal submatrix of $X^\top X$, leading to reconstruction failure. Indeed, all incorrect reconstructions of $G_X(\sigma^2)$ produced by Algorithm~\ref{alg:noisymain} in this experiment, while having a large overall $\lambda_4$, exhibited at least one $4 \times 4$ principal minor whose $\lambda_4$ was smaller than that of the corresponding minor in $X^\top  X$. 

\begin{Remark}
A possible way to improve the performance of Algorithm~\ref{alg:noisymain} is as follows: For each $\iota$, one can short-list and keep track of all candidates $M_{\iota, m}$ that exhibit similar magnitudes in their minimal eigenvalues. Given $\iota'$ (in a subsequent iteration) that differs from $\iota$ by a single element (say $i \in \iota$ and $j \in \iota'$ with $i \neq j$), we filter down the short-lists to only compatible candidate pairs ($M_{\iota,m}$, $M'_{\iota, m}$) such that the entries of $M_{\iota,m}$ and $M'_{\iota, m}$ are equal over the rows and columns indexed by $\iota \cap \iota'$. Among such compatible pairs, we select the pair (along with a value for the entry $M_{ij} \in \valid[(i,j)]$) that minimizes $\lambda_4$ of the $(n+2)\times (n+2)$ matrix indexed by $\iota \cup \iota'$. Once this determination is made, we can fix these entries in $G_X$ and proceed iteratively. Of course, this modification comes at the price of increasing the space and time complexity of the Algorithm.
\end{Remark}

\bibliographystyle{plain}

\begin{thebibliography}{10}

\bibitem{abas2022generalized}
Asaf Abas, Tamir Bendory, and Nir Sharon.
\newblock The generalized method of moments for multi-reference alignment.
\newblock {\em IEEE Transactions on Signal Processing}, 70:1377–1388, 2022.

\bibitem{bandeira2023estimation}
Afonso~S Bandeira, Ben Blum-Smith, Joe Kileel, Jonathan Niles-Weed, Amelia
  Perry, and Alexander~S Wein.
\newblock Estimation under group actions: recovering orbits from invariants.
\newblock {\em Applied and Computational Harmonic Analysis}, 66:236--319, 2023.

\bibitem{bandeira2017optimal}
Afonso~S. Bandeira, Philippe Rigollet, and Jonathan Weed.
\newblock Optimal rates of estimation for multi-reference alignment.
\newblock {\em Mathematical Statistics and Learning}, 2017.

\bibitem{bendory2020single}
Tamir Bendory, Alberto Bartesaghi, and Amit Singer.
\newblock Single-particle cryo-electron microscopy: Mathematical theory,
  computational challenges, and opportunities.
\newblock {\em IEEE Signal Processing Magazine}, 37(2):58--76, March 2020.
\newblock Epub 2020 Feb 27.

\bibitem{bendory2025transversality}
Tamir Bendory, Nadav Dym, Dan Edidin, and Arun Suresh.
\newblock A transversality theorem for semi-algebraic sets with application to
  signal recovery from the second moment and cryo-em.
\newblock {\em Foundations of Computational Mathematics}, sep 2025.

\bibitem{bendory2024sample}
Tamir Bendory and Dan Edidin.
\newblock The sample complexity of sparse multireference alignment and
  single-particle cryo-electron microscopy.
\newblock {\em SIAM Journal on Mathematics of Data Science}, 6(2):254--282,
  2024.

\bibitem{bendory2025orbit}
Tamir Bendory, Dan Edidin, Josh Katz, and Shay Kreymer.
\newblock Orbit recovery for spherical functions.
\newblock {\em arXiv preprint arXiv:2508.02674}, 2025.

\bibitem{bendory2024beltway}
Tamir Bendory, Dan Edidin, and Oscar Mickelin.
\newblock The beltway problem over orthogonal groups.
\newblock {\em Appl. Comput. Harmon. Anal.}, 74:Paper No. 101723, 9, 2025.

\bibitem{bendory2023autocorrelation}
Tamir Bendory, Yuehaw Khoo, Joe Kileel, Oscar Mickelin, and Amit Singer.
\newblock Autocorrelation analysis for cryo-em with sparsity constraints:
  Improved sample complexity and projection-based algorithms.
\newblock {\em Proceedings of the National Academy of Sciences}, 120(18), April
  2023.

\bibitem{Billinge2016Assigned}
Simon J.~L. Billinge, Phillip~M. Duxbury, Douglas~S. Goncalves, Carlile
  Lavor, and Antonio Mucherino.
\newblock Assigned and unassigned distance geometry: applications to biological
  molecules and nanostructures.
\newblock {\em 4OR}, 14(4):337--376, December 2016.

\bibitem{blumenthal1953theory}
Leonard~M. Blumenthal.
\newblock {\em Theory and applications of distance geometry}.
\newblock Oxford, at the Clarendon Press, 1953.

\bibitem{boutin2003reconstructing}
Mireille Boutin and Gregor Kemper.
\newblock On reconstructing {$n$}-point configurations from the distribution of
  distances or areas.
\newblock {\em Adv. in Appl. Math.}, 32(4):709--735, 2004.

\bibitem{boutin2007which}
Mireille Boutin and Gregor Kemper.
\newblock Which point configurations are determined by the distribution of
  their pairwise distances?
\newblock {\em International Journal of Computational Geometry and
  Applications}, 17(1):31--44, 2007.

\bibitem{boutin2020drone}
Mireille Boutin and Gregor Kemper.
\newblock A drone can hear the shape of a room.
\newblock {\em SIAM Journal on Applied Algebra and Geometry}, 4(1):123--140,
  2020.

\bibitem{boutin2024global}
Mireille Boutin and Gregor Kemper.
\newblock Global positioning: The uniqueness question and a new solution
  method.
\newblock {\em Advances in Applied Mathematics}, 160, September 2024.

\bibitem{boutin2025developments}
Mireille Boutin and Gregor Kemper.
\newblock New developments in global positioning.
\newblock {\em Notices of the American Mathematical Society Vol.~72, No.~8},
  September 2025.

\bibitem{connelly2024reconstruction}
Robert Connelly, Steven~J. Gortler, and Louis Theran.
\newblock Reconstruction in one dimension from unlabeled {E}uclidean lengths.
\newblock {\em Combinatorica}, 44(6):1325--1351, December 2024.

\bibitem{doerr2022emerging}
Allison Doerr.
\newblock A dynamic direction for cryo-em.
\newblock {\em Nature Methods}, 19(1):29--29, 2022.

\bibitem{dokmanic2015euclidean}
Ivan Dokmanic, Reza Parhizkar, Juri Ranieri, and Martin Vetterli.
\newblock Euclidean distance matrices: Essential theory, algorithms, and
  applications.
\newblock {\em IEEE Signal Processing Magazine}, 32(6):12--30, 2015.

\bibitem{dokmanic2014localize}
Ivan Dokmanić, Laurent Daudet, and Martin Vetterli.
\newblock How to localize ten microphones in one finger snap.
\newblock In {\em 2014 22nd European Signal Processing Conference (EUSIPCO)},
  pages 2275--2279, 2014.

\bibitem{dokmanic2013echoes}
Ivan Dokmanić, Reza Parhizkar, Andreas Walther, Yue~M. Lu, and Martin
  Vetterli.
\newblock Acoustic echoes reveal room shape.
\newblock {\em Proceedings of the National Academy of Sciences},
  110(30):12186--12191, 2013.

\bibitem{duxbury2016unassigned}
P.M. Duxbury, L.~Granlund, S.R. Gujarathi, P.~Juhas, and S.J.L. Billinge.
\newblock The unassigned distance geometry problem.
\newblock {\em Discrete Applied Mathematics}, 204:117--132, 2016.

\bibitem{gkioulekas2024trilateration}
Ioannis Gkioulekas, Steven~J. Gortler, Louis Theran, and Todd Zickler.
\newblock Trilateration using unlabeled path or loop lengths.
\newblock {\em Discrete \& Computational Geometry}, 71(2):399--441, March 2024.

\bibitem{gortler2019generic}
Steven~J. Gortler, Louis Theran, and Dylan~P. Thurston.
\newblock Generic unlabeled global rigidity.
\newblock {\em Forum of Mathematics, Sigma}, 7:e21, 2019.

\bibitem{huang2021reconstructing}
Shuai Huang and Ivan Dokmanić.
\newblock Reconstructing point sets from distance distributions.
\newblock {\em IEEE Transactions on Signal Processing}, 69:1811--1827, 2021.

\bibitem{kam1980reconstruction}
Zvi Kam.
\newblock The reconstruction of structure from electron micrographs of randomly
  oriented particles.
\newblock {\em Journal of Theoretical Biology}, 82(1):15--39, 1980.

\bibitem{menger1928untersuchungen}
Karl Menger.
\newblock Untersuchungen \"{u}ber allgemeine {M}etrik.
\newblock {\em Math. Ann.}, 100(1):75--163, 1928.

\bibitem{patterson1944ambiguities}
A~Lindo Patterson.
\newblock Ambiguities in the x-ray analysis of crystal structures.
\newblock {\em Physical Review}, 65(5-6):195, 1944.

\bibitem{perry2019sample}
Amelia Perry, Jonathan Weed, Afonso~S. Bandeira, Philippe Rigollet, and Amit
  Singer.
\newblock The sample complexity of multireference alignment.
\newblock {\em SIAM Journal on Mathematics of Data Science}, 1(3):497--517,
  2019.

\bibitem{Takagi1924Algebraic}
Teiji Takagi.
\newblock On an algebraic problem related to an analytic theorem of
  Carath\'eodory and Fej\'er and on an allied theorem of Landau.
\newblock {\em Japanese journal of mathematics: Transactions and abstracts},
  1:83--93, 1924.

\bibitem{toader2023frontier}
Bogdan Toader, Fred~J. Sigworth, and Roy~R. Lederman.
\newblock Methods for cryo-em single particle reconstruction of macromolecules
  having continuous heterogeneity.
\newblock {\em Journal of Molecular Biology}, 435(9):168020, 2023.
\newblock New Frontier of Cryo-Electron Microscopy Technology.

\bibitem{wirth2009edge}
Karl Wirth and André Dreiding.
\newblock Edge lengths determining tetrahedrons.
\newblock {\em Elemente Der Mathematik}, 64:160--170, 12 2009.

\end{thebibliography}

\appendix

\section{Gram-like factorizations of complex symmetric matrices} \label{sec:Takagi}
Every $m \times m$ psd symmetric matrix $A$ of rank $n \leq  m$ has a Gram factorization as $A= X^\top  X$ where $X \in \r^{n \times m}$.
The following lemma shows that, if we consider matrices with complex coefficients then any rank $n$
symmetric $m \times m$ matrix can be factored as $X^\top X$ with $X \in \c^{n \times m}$.
%It is a well known fact that every $m\times m$ real symmetric matrix $A$ is diagonalizable - i.e, it admits a factorization %$A=UDU^T$ with $U \in O(m)$ orthogonal and $D$ a diagonal matrix containing the eigenvalues of $A$. In addition if $A$ is %%positive semidefinite, the eigenvalues are non-negative and consequently $A$ admits a Gram factorization of the form $A=X^\top X$. Over the complex numbers, the situation is more nuanced. For one, not every complex symmetric matrix is diagonalizable, with the standard example being $$M = \begin{bmatrix}
%    1 & i \\
%    i & -1
%\end{bmatrix}$$ with both of its eigenvalues zero. Nevertheless, a generic complex symmetric matrix is diagonalizable. Although diagonalization is only a generic phenomenon over $\c$, every complex symmetric matrix admits an analogue of diagnoalization called the Takagi factorization. Through the Takagi factorization, every complex matrix $A$ can be written in the form $A=UDU^T$ with $U \in \c^{m\times m}$ \textit{unitary}, $D$ an $m\times m$ diagonal matrix with non-negative (real) diagonal entries. The diagonal entries of $D$ represent square roots of the non-negative eigenvalues of the Hermitian positive semidefinite matrix $A^*A$. We can use this fact to show that every complex symmetric matrix admits a Gram decomposition. We show this by showing a slightly stronger result.

\begin{Lemma}\label{lem:takagi}
    Given $n\leq m$, the locus of $m \times m$ complex symmetric matrices with rank $\leq n$ is the image of $\c^{n\times m}$ under the map $\phi: \c^{n\times m} \to \operatorname{Sym}^2(\c^m)$ given by $X \mapsto X^\top X$. 
\end{Lemma}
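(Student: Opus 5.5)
The inclusion of the image of $\phi$ in the rank-$\leq n$ locus is immediate. If $A = X^\top X$ with $X \in \c^{n\times m}$, then $A$ is symmetric and $\rank{A} \leq \rank{X} \leq n$. All the work is in the reverse inclusion: every complex symmetric $m\times m$ matrix of rank $r\leq n$ must be shown to factor as $X^\top X$ with $X \in \c^{n\times m}$.

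For the reverse inclusion I will use the classification of complex symmetric bilinear forms rather than eigen-decomposition, since complex symmetric matrices need not be diagonalizable. Let $A$ be symmetric of rank $r$. The symmetric bilinear form $(u,w)\mapsto u^\top A w$ on $\c^m$ has radical $\ker A$ of dimension $m-r$. On a complement of the radical the form is nondegenerate. Over $\c$ (characteristic $\neq 2$, with every element a square) a nondegenerate symmetric bilinear form has an orthonormal basis. The standard Gram--Schmidt argument supplies it: a nondegenerate form has some $u$ with $u^\top A u \neq 0$ by polarization; rescale $u$ by a square root so that $u^\top A u = 1$, pass to $u^{\perp}$, and induct. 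This gives an invertible $P\in \mathrm{GL}_m(\c)$ with
\[
P^\top A P = \begin{bmatrix} I_r & 0 \\ 0 & 0 \end{bmatrix}.
\]
Alternatively, I can cite Takagi's factorization~\cite{Takagi1924Algebraic}, $A = U D U^\top$ with $U$ unitary and $D$ nonnegative diagonal with exactly $r$ nonzero entries, and set $P^{-1}$ accordingly. Either route gives the same normal form.

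From the normal form, set $Q = P^{-1}$ and let $Q_r$ be its first $r$ rows, an $r\times m$ matrix. Then $A = Q^\top \operatorname{diag}(I_r,0) Q = Q_r^\top Q_r$. To obtain an $n\times m$ factor, pad $Q_r$ with $n-r$ zero rows to get $X\in\c^{n\times m}$. Since zero rows contribute nothing, $X^\top X = Q_r^\top Q_r = A$, which proves the lemma.

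The only step needing care is the existence of an orthonormal basis for the nondegenerate part, that is, finding a vector with $u^\top A u \neq 0$. If $u^\top A u = 0$ for all $u$, then $2u^\top A w = (u+w)^\top A(u+w) - u^\top A u - w^\top A w = 0$ for all $u,w$, so $A=0$ on that subspace, contradicting nondegeneracy. Once this is in place, everything else is routine linear algebra.
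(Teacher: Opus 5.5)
Your proof is correct, but your main route differs from the paper's. The paper cites Takagi's factorization $A = U^\top D U$, with $U$ unitary and $D$ nonnegative diagonal. It permutes the at most $n$ nonzero entries of $D$ to the front, keeps the first $n$ rows $U'$ of $U$, and sets $X = \sqrt{D'}\,U'$. You instead prove the congruence normal form $P^\top A P = \mathrm{diag}(I_r,0)$ from scratch: you split off the radical $\ker A$, run the Gram--Schmidt-type induction on the nondegenerate part, and then pad $Q_r$ with zero rows.

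Your argument is self-contained and purely algebraic. It uses only that $2$ is invertible and that every scalar has a square root, so it works verbatim over any quadratically closed field of characteristic $\neq 2$. Because you pad with $n-r$ zero rows, it also never uses the hypothesis $n \leq m$.

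The paper's version is shorter on the page but hands the real work to Takagi's theorem, whose proof relies on the Hermitian structure of $\mathbb{C}^m$ (e.g., via the singular value decomposition). That theorem delivers more than the lemma needs: a unitary factor and a real nonnegative $D$, so $\sqrt{D'}$ is an ordinary nonnegative square root. In exchange, it gives a construction of $X$ better suited to numerical computation. Your fallback citation of Takagi is essentially the paper's proof.
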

\begin{proof}
    Let $\phi: \c^{n\times m} \to \operatorname{Sym}^2(\c^m)$ be given by $\phi(X) = X^\top X$. Certainly, we see that for any $X \in \c^{m\times m}$
    $$\rank{X^\top X} \leq \rank{X^\top } = \rank{X} \leq n$$
    Thus $\im(\phi)$ is contained within the rank $\leq n$ locus of $\operatorname{Sym}^2(\c^m)$. Conversely, suppose that $A \in \operatorname{Sym}^2(\c^m)$ has rank $\leq n$. Since $A$ is a complex symmetric matrix, it admits a Takagi factorization of the form $A=U^TDU$ where $U$ is a unitary matrix and $D$ is a diagonal matrix with non-negative entries \cite{Takagi1924Algebraic}. 
    Since the rank of $A$ is at-most $n$, we see that
    $D$ has rank at most $n$, since $U$ is invertible. Hence after permuting the diagonal of $D$ 
    and replacing $U$ with the product of $U$ with a corresponding permutation matrix we may assume
    that the last $m-n$ diagonal entries in $D$ are zero. Let
    $U'$ be the $n \times m$ matrix consisting of the first $n$ rows of $U$. Then, since the last $n-m$ rows
    and columns of $D$ are zero 
    $A = (U')^T D' U'$ where $D'$ is the $n \times n$ matrix obtained deleting the last $n-m$ rows and columns of $D$. 
    Then take $X = \sqrt{D'} U'$. 
\end{proof}

\begin{Remark}
When $A$ is a real symmetric matrix then $A$ is orthogonally diagonalizable and we can take $U$ to be a real orthogonal matrix in the Takagi decomposition. Not all complex symmetric matrices are diagonalizable, but they nevertheless have a Takagi decomposition. For example if $A = \begin{bmatrix}1 & i \\i & -1\end{bmatrix}$ then $A$ is not diagonalizable
but has a Takagi decomposition $A = U^T \begin{bmatrix}2 & 0\\0 & 0\end{bmatrix} U$ where
$U = \frac{1}{\sqrt{2}} \begin{bmatrix}1 & i\\ i & 1\end{bmatrix}$ and $A = \begin{bmatrix} 1\\i\end{bmatrix} \begin{bmatrix} 1 & i \end{bmatrix}$
\end{Remark}
\section{Concerning the $4\times 4$ Cayley-Menger determinant} \label{ap:biquadratic}

Theorem \ref{thm:cayleymenger} gives a criterion for when six positive numbers $E = (a,b,c,x,y,z)$ form the edge-lengths of a tetrahedron. In particular, this criterion involves computing the so called Cayley-Menger determinant
    $$D(E) = \begin{vmatrix}
        0 & a^2 & b^2 & c^2 & 1 \\
        a^2 & 0 & z^2 & y^2 & 1\\
        b^2 & z^2 & 0 & x^2 & 1\\
        c^2 & y^2 & x^2 & 0 & 1 \\
        1 & 1 & 1 & 1 & 0 
    \end{vmatrix}$$

Here we show that when $E = (1,1, r, x, y, z)$ with fixed values of $x,y,r$ such that $r \neq 1$ and $|r-1| < x, y < r+1$ the Cayley-Menger determinant is a biquadratic polynomial in $z$ having four distinct real roots. In particular, there exist precisely two distinct positive roots - which constitute the upper and lower Cayley-Menger bounds extracted in Algorithm \ref{alg:preprocessing}.

\begin{Proposition}\label{prop:cayleymengerprop}
   Let $x,y,r$ be fixed distinct positive numbers such that $r \neq 1$ and $|r-1| < x,y < r+1$.
   Then the biquadratic polynomial $f(z) = D(1,1,r,x,y,z)$  has four distinct real roots.
\end{Proposition}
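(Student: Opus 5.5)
The plan is to set $w=z^2$. Then $f(z)=D(1,1,r,x,y,z)$ is a quadratic polynomial $g(w)$ in $w$, and it suffices to show that $g$ has two distinct \emph{positive} roots $0<w_-<w_+$. The four roots of $f$ are then $\pm\sqrt{w_-},\pm\sqrt{w_+}$, which are real and distinct. Rather than expanding the $5\times 5$ determinant by brute force, I will identify $g$ geometrically through the classical identity that the Cayley--Menger determinant of four points in $\r^3$ equals $288\,V^2$, where $V$ is the volume of their tetrahedron. The determinant is a polynomial in the squared distances, so this identity holds for all genuine point configurations.

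Reading off the matrix in~\eqref{eq.cayleymenger}, label the vertices $P_0,P_1,P_2,P_3$. Then $|P_0P_1|=|P_0P_2|=1$, $|P_0P_3|=r$, $|P_1P_2|=z$, $|P_1P_3|=y$ and $|P_2P_3|=x$. Put $P_0=0$ and $P_3=re_1$. The point $P_1$ must lie on the intersection of the unit sphere with the sphere of radius $y$ about $P_3$. Because of the strict inequalities $|r-1|<y<r+1$, this intersection is a genuine circle: it is centred on the axis at height $h_1=(1+r^2-y^2)/(2r)$ and has radius $\rho_1=\sqrt{1-h_1^2}>0$. Likewise $P_2$ ranges over a circle of height $h_2=(1+r^2-x^2)/(2r)$ and radius $\rho_2>0$.

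Let $\theta$ be the angle between the two half-planes through the axis that contain $P_1$ and $P_2$. Then
\[
z^2 = (h_1-h_2)^2+\rho_1^2+\rho_2^2-2\rho_1\rho_2\cos\theta .
\]
The volume is $V=\tfrac16\, r\,\rho_1\rho_2|\sin\theta|$, since it is the base $\tfrac12 r\rho_1$ times the height $\rho_2|\sin\theta|$, divided by $3$.

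Next set $w_\pm=(h_1-h_2)^2+(\rho_1\pm\rho_2)^2$, which are the values of $z^2$ at $\theta=\pi$ and $\theta=0$. A direct computation gives
\[
(w-w_-)(w_+-w)=4\rho_1^2\rho_2^2\sin^2\theta .
\]
Hence $288V^2=2r^2\rho_1^2\rho_2^2\sin^2\theta=\tfrac{r^2}{2}(w-w_-)(w_+-w)$. So $g(w)$ and $\tfrac{r^2}{2}(w-w_-)(w_+-w)$ agree for every $w\in[w_-,w_+]$. Both are polynomials of degree at most two in $w$, and they agree on an infinite set, so they are identical. It follows that the roots of $g$ are exactly $w_-$ and $w_+$.

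Finally, I will check positivity and distinctness. We have $w_+-w_-=4\rho_1\rho_2>0$. For $w_->0$, use the hypothesis $x\neq y$: it gives $h_1\neq h_2$, and therefore $w_-\ge (h_1-h_2)^2>0$. This is precisely where the assumption that the numbers are distinct is needed. If $x=y$, the lower root would collapse to $0$ and $f$ would have a double root at $z=0$.

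The main obstacle is bookkeeping rather than conceptual. I need to get the normalization $288V^2$ and the sign convention of $D$ right, matching which entry of the matrix corresponds to which edge. An error there only rescales $g$ by a nonzero constant, which does not affect its roots. As a fallback, I can expand $D$ symbolically and compare it with the product $(w-w_-)(w_+-w)$ directly.
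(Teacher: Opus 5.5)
Your proof is correct, and it takes a genuinely different route from the paper's.

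\textbf{The paper's route.} The paper expands the determinant as $\alpha w^2+\beta w+\gamma$ with $\alpha=-2r^2$, $\gamma=-2(x^2-y^2)^2$ and an explicit $\beta(x,y)$. It shows $\beta\ge 0$ by a critical-point analysis over the square $[|r-1|,r+1]^2$. It then uses Vieta to see that any real roots must be positive, and asserts positivity of the discriminant as a ``routine computation'' without carrying it out.

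\textbf{Your route.} Parametrizing by the dihedral angle $\theta$ gives the factorization of $g$ directly. Reality, distinctness and positivity of the roots are then immediate, and this in effect supplies the discriminant check the paper omits.

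\textbf{One arithmetic slip.} In fact $288V^2=288\cdot\tfrac{1}{36}r^2\rho_1^2\rho_2^2\sin^2\theta=8r^2\rho_1^2\rho_2^2\sin^2\theta$, so $g(w)=2r^2(w-w_-)(w_+-w)$ rather than $\tfrac{r^2}{2}(\cdots)$. On your sign worry: the paper places the border last, which is a simultaneous row/column permutation, so $D=+288V^2$. As you anticipated, the constant does not affect the roots.

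With the corrected constant your formula reproduces the paper's coefficients exactly:
\begin{itemize}
\item the leading coefficient is $-2r^2$;
\item $\beta=2r^2(w_-+w_+)=8r^2(1-h_1h_2)$, which is visibly positive since $|h_1|,|h_2|<1$;
\item $\gamma=-2r^2w_-w_+=-2(x^2-y^2)^2$, because $w_-w_+=4(h_1-h_2)^2$;
\item the discriminant is $64r^4\rho_1^2\rho_2^2>0$.
\end{itemize}

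\textbf{What each approach buys.} Your argument shows that $r\neq 1$ is never needed, and that $x\neq y$ is the only distinctness hypothesis actually used. It also gives the Cayley--Menger bounds of the preprocessing step in closed form, $\alpha_{ij}=\sqrt{w_-}$ and $\beta_{ij}=\sqrt{w_+}$. These have a transparent meaning: they are the extreme values of $d_{ij}$ as one point rotates about the line through $0$ and $v_m$, attained exactly when $0,v_i,v_j,v_m$ are coplanar. The paper's computation, in exchange, is purely mechanical and does not depend on the volume identity $D=288V^2$.
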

\begin{proof}
   Given $E = (1,1,r,x,y,z)$ with $r,x,y$  fixed then $D(E) = \alpha z^4 + \beta z^2 + \gamma$
    where
   \begin{equation*}
       \begin{split}
           \alpha(x,y) &= -2r^2\\
           \beta(x,y) &= -2+4r^2-2r^4+2x^2+2r^2x^2+2y^2+2r^2y^2-2x^2y^2\\
           \gamma(x,y) &= -2x^4 + 4x^2y^2 - 2y^4
       \end{split}
   \end{equation*}
   Substituting $t=z^2$ into $D(E)$ it suffices to show that the polynomial  $D_E(t) = \alpha t^2+\beta t+\gamma c$  has two positive real roots $t_1$ and $t_2$. \\

    We begin by analyzing the signs of the coefficients $\alpha,\beta$ and $\gamma$. Certainly $\alpha < 0$ and $\gamma = -2(x^2-y^2)^2 < 0$. 
    %To determine the sign of $\beta$ we first observe that for $i=1,2$, $$|r-1| = |\|v\|-\|v_i\|| \leq \|v_i-v\| \leq \|v_i\|+\|v\| = 1+r$$ which forces $(x,y) \in D(r) = [|r-1|, r+1]^2$ for any fixed $r>0$. 
    We will show that when $r \neq 1$, $\beta (x,y) \geq 0$ when $|r-1| < x,y < r+1$, by minimizing $\beta$ as a function of $x, y$ over the
    the square $[|r-1|,r+1] \times [|r-1|, r+1]$ and showing that the minimum is non-negative. To this end note that 
    $\nabla \beta = \langle 4x(1+r^2-y^2), 4y(1+r^2-x^2)\rangle$ vanishes if and only if $x=y=0$ or $x=y=\sqrt{1+r^2}$. However, since $r \neq 1$ and $x,y > |r-1|$, we obtain $x,y\neq 0$, forcing $x=y=\sqrt{1+r^2}$ to be the only relevant critical point in the interior of 
    the square $[|r-1|, r+1] \times [|r-1|, r+1]$. Any other extrema of $\beta(x,y)$ over the square  occur at its boundary. A similar verification along each edge of the square shows that there are no non-trivial critical points along the boundary, leaving the extreme points along the boundary to occur at its vertices. Evaluating $\beta(x,y)$ at its unique internal critical point and at each of the four vertices of the square
    \begin{equation*}
        \begin{split}
            \beta(\sqrt{1+r^2}, \sqrt{1+r^2}) = r^2(1+r^2) &> 0\\
            \beta(r+1,r+1) = \beta(|r-1|,|r-1|) &= 0\\
            \beta(|r-1|,r+1) = \beta(r+1,|r-1|) = 16r^2 &> 0
        \end{split}
    \end{equation*}
    Thus, by the extreme value theorem, $\beta(x,y) >0$ on the square when $|r-1| < x,y < r+1$ as desired.

Having determined the signs of $\alpha,\beta$ and $\gamma$ - we can derive the desired result via the following observation: the roots $t_1$ and $t_2$, if they exist, must satisfy the Vieta relations $$t_1t_2 = \gamma/\alpha > 0 \text{ and } t_1 + t_2 = -\frac{\beta}{\alpha} \geq 0.$$ From the first equation, we deduce that if $t_1, t_2$ are real then they have the same sign. From the second equation, we deduce that this sign has to be positive. Finally, a routine computation ensures that the discriminant $\Delta$ of $D_E(t)$ is 
positive on the interior of the square $[|r-1|, r+1] \times [|r-1|, r+1]$ so
the two roots $t_1$ and $t_2$ are indeed real and positive Hence the biquadratic polynomial $f(z)$ has two positive real roots.    
   
%    Moreover, the discriminant $\Delta$ only vanishes at the extreme values $(x,y) = (|r-1|, |r-1|)$ and $(x,y) = (r+1,r+1)$ which correspond to $v_1 = v_2$. Thus $\Delta > 0$ in all regions of interest, and as a result, $D(E)$ has four distinct real roots. \arun{[[Arun: Though this doesn't happen in practice, in the extreme case where the discriminant is zero, we have $t_1 = t_2$ and therefore $z$ (corresponding to the missing edge length) is uniquely determined. As such, we can remove this line and instead say in the statement of the proposition ``the biquadratic polynomial $f(z) = D(1,1,r,x,y,z)$ has two positive real roots (counted with multiplicity)".]]}
\end{proof}
\end{document}